\newtheorem{thm}{Theorem}[section]
\newtheorem{cor}[thm]{Corollary}
\newtheorem{lem}[thm]{Lemma}
\newtheorem{exam}[thm]{Example}
\numberwithin{equation}{section}
\begin{document}

\title[the sum of a tripotent and a nilpotent]{Rings over which every matrix is the sum of a tripotent and a nilpotent}

\author{Marjan Sheibani}
\author{Huanyin Chen}
\address{
Faculty of Mathematics\\Statistics and Computer Science\\  Semnan University\\ Semnan, Iran}
\email{<m.sheibani1@gmail.com>}
\address{
Department of Mathematics\\ Hangzhou Normal University\\ Hang -zhou, China}
\email{<huanyinchen@aliyun.com>}

\subjclass[2010]{16U99, 16E50, 16S34.} \keywords{Tripotent matrix; nilpotent matrix; trinil clean ring; strongly 2-nil-clean ring.}

\begin{abstract}
A ring $R$ is  trinil clean if every element in $R$ is the sum of a tripotent and a nilpotent.
If $R$ is a 2-primal strongly 2-nil-clean ring, we prove that $M_n(R)$ is trinil clean for all $n\in {\Bbb N}$.
Furthermore, we show that the matrix ring over a strongly 2-nil-clean ring of bounded index is trinil clean. We thereby provide various type of rings over which every matrix is the sum of a tripotent and a nilpotent.\end{abstract}

\maketitle

\section{Introduction}

Throughout, all rings are associative with an identity. A ring $R$ is nil clean provided that every element in $R$ is the sum of an idempotent and a nilpotent ~\cite{D}. A ring $R$ is weakly nil-clean provided that every element in $R$ is the sum or difference of a nilpotent and an idempotent~\cite{BD}.
The subjects of nil-clean rings and weakly nil-clean rings are interested for so many mathematicians, e.g., ~\cite{BD,B,CH,DW,D,K,KWZ,KZ} and ~\cite{ST}.

The purpose of this paper is to consider matrices over a new type of rings which cover (weakly) nil clean rings. An element $e\in R$ is a tripotent if $e=e^3$. We call a ring $R$ is trinil clean provided that every element in $R$ is the sum of a tripotent and a nilpotent. We shall explore when a matrix ring is trinil-clean, i.e., when every matrix over a ring can be written as the sum of a tripotent and a nilpotent. A ring $R$ is 2-primal if its prime radical coincides with the set of nilpotent elements of the ring, e.g., commutative rings, reduced rings, etc.
Following the authors, a ring $R$ is strongly 2-nil-clean if every element in $R$ is the sum of two idempotents and a nilpotent that commute. Evidently, a ring $R$ is strongly 2-nil-clean if and only if every element in $R$ is the sum of a tripotent and a nilpotent that commute (see~\cite[Theorem 2.8]{CS}). If $R$ is a 2-primal strongly 2-nil-clean ring, we prove that $M_n(R)$ is trinil clean for all $n\in {\Bbb N}$.
Furthermore, we show that the matrix ring over a strongly 2-nil-clean ring of bounded index is trinil clean. This provides a large class of rings over which
every matrix is the sum of a tripotent and a nilpotent.

We use $N(R)$ to denote the set of all nilpotent elements in $R$ and $J(R)$ the Jacobson radical of $R$.
${\Bbb N}$ stands for the set of all natural numbers.

\section{Structure Theorems}

\vskip4mm The aim of this section is to investigate general structure of trinil clean rings which will be used in the sequel.
We begin several examples of such rings.

\begin{exam} The class of trinil clean rings contains many familiar examples.\end{exam}
\begin{enumerate}
\item [(1)] Every weakly nil-clean ring is trinil clean, e.g., strongly nil-clean rings, nil-clean rings, Boolean rings, weakly Boolean rings.
\item [(2)] Every strongly trinil clean ring is trinil clean.
\item [(3)] A local ring $R$ is trinil clean if and only if $R/J(R)\cong {\Bbb Z}_2$ or ${\Bbb Z}_3$, $J(R)$ is nil.
\end{enumerate}
\vspace{-.5mm}

We also provide some examples illustrating which ring-theoretic extensions of trinil clean rings produce trinil clean rings.

\begin{exam} \end{exam}
\begin{enumerate}
\item [(1)] Any quotient of a trinil clean ring is trinil clean.
\item [(2)] Any finite product of trinil clean rings is trinil clean. But $R={\Bbb Z}_2\times {\Bbb Z}_4\times {\Bbb Z}_8\times $ is an infinite product of trinil clean rings, which is not trinil clean. The element $(0,2,2,2,\cdots )\in R$ can not written as the sum of a tripotent and a nilpotent element.
\item [(3)] The triangular matrix ring $T_n(R)$ is trinil clean if and only if $R$ is trinil clean.
\item [(4)] The quotient ring $R[[x]]/(x^n) (n\in {\Bbb N})$ of a trinil clean ring $R$ is trinil clean.
\end{enumerate}

\begin{lem} Let $R$ be trinil clean. Then $6\in N(R)$.\end{lem}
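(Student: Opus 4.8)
The plan is to exploit the defining property on a single, carefully chosen central element, namely $2 = 1+1$. Since $R$ is trinil clean, I can write $2 = e + w$ with $e = e^3$ a tripotent and $w \in N(R)$. The useful feature of this choice is that no commutation hypothesis between $e$ and $w$ is required: the only elements I will actually manipulate are $w$ and the integer $2$, and $2$ lies in the center of $R$, which will be enough to run the computation.

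Rewriting $e = 2 - w$ and substituting into the tripotent relation $e^3 = e$, I would expand $(2-w)^3$ by the binomial theorem, which is legitimate precisely because $2$ commutes with $w$. This gives $(2-w)^3 = 8 - 12w + 6w^2 - w^3$, and equating with $2 - w$ yields $6 - 11w + 6w^2 - w^3 = 0$, that is, $6 = 11w - 6w^2 + w^3$.

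It then remains only to observe that the right-hand side is nilpotent. Indeed $w, w^2, w^3$ are nilpotent elements that commute with one another, being powers of $w$, so $11w - 6w^2 + w^3$ is a ${\Bbb Z}$-linear combination of pairwise commuting nilpotents and is therefore nilpotent; equivalently, it is a polynomial in the single nilpotent $w$ with zero constant term. Hence $6 \in N(R)$, as claimed.

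I do not expect a serious obstacle here, since the computation is short. The only genuinely substantive decisions are (i) testing the property on $2$ rather than on $3$, since testing on $3$ only produces the nilpotence of $24$, and (ii) recognizing that the centrality of the integer $2$ is exactly what licenses the binomial expansion even though $R$ is noncommutative and $e, w$ need not commute. If one wanted a more structural route, one could instead pass to $R/P$ for each prime ideal $P$, which is again trinil clean by the earlier remark on quotients, and show $6 = 0$ there; but the elementary argument above is cleaner and avoids this reduction altogether.
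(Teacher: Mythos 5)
Your proposal is correct and is essentially the paper's own argument: the paper likewise applies the trinil clean decomposition to the central element $2=e+w$, observes that $e$ and $w$ then automatically commute (since $w=2-e$), and concludes $2-2^3=(e+w)-(e+w)^3\in N(R)$, i.e.\ $6\in N(R)$. Your variant of eliminating $e$ and expanding $(2-w)^3=2-w$ to get $6=11w-6w^2+w^3\in N(R)$ is the same computation, merely organized so that the commutation of $e$ and $w$ never needs to be stated explicitly.
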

\begin{proof} By hypothesis, there exists a tripotent $e\in R$ such that $w:=2-e\in N(R)$ with $ew=we$, and so
$2-2^3=(e+w)-(e+w)^3\in N(R)$. This shows that $6\in N(R)$.\end{proof}

\begin{thm} Let
$R$ be a ring. Then the following are equivalent:
\end{thm}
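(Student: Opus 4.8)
The plan is to prove the equivalence by transferring the trinil clean property across the quotient map $R \to R/J(R)$, showing that it is governed by two independent conditions: that $J(R)$ be nil, and that $R/J(R)$ be trinil clean. The two implications behave very differently, so I would treat them separately, using Example 2.2(1) and Lemma 2.3 as the inputs from the preceding development.

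For the forward direction I would first note that $R/J(R)$ is trinil clean for free, since it is a homomorphic image of $R$ and quotients of trinil clean rings are trinil clean by Example 2.2(1). The substantive point is that $J(R)$ is nil. To establish this I would take $a\in J(R)$ and write $a=e+w$ with $e=e^3$ and $w\in N(R)$. Reducing modulo $J(R)$ gives $\bar e=-\bar w$, so $\bar e$ is at once a tripotent and a nilpotent of $R/J(R)$; since a nilpotent tripotent vanishes ($\bar e=\bar e^3=\bar e^5=\cdots=0$), we get $e\in J(R)$. Then $e^2\in J(R)$, so $1-e^2$ is a unit, and from $e(1-e^2)=e-e^3=0$ we conclude $e=0$. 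Hence $a=w\in N(R)$, which shows $J(R)\subseteq N(R)$, i.e. $J(R)$ is nil.

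For the converse, given $a\in R$ I would reduce modulo $J(R)$ and use that $R/J(R)$ is trinil clean to write $\bar a=\bar e+\bar w$ with $\bar e$ a tripotent and $\bar w$ a nilpotent. After lifting $\bar e$ to a tripotent $e\in R$, the element $a-e$ maps to $\bar w$, so $(a-e)^k\in J(R)$ for some $k$; since $J(R)$ is nil, $a-e$ is itself nilpotent, and $a=e+(a-e)$ is the desired decomposition. Thus everything hinges on being able to lift the tripotent $\bar e$ through the nil ideal $J(R)$.

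The main obstacle is exactly this lifting of tripotents modulo a nil ideal. Idempotents lift modulo nil ideals by the standard argument, so I would first lift the idempotent $\bar e^{\,2}$ to an idempotent $f\in R$ and pass to the corner ring $fRf$, inside which the lift of $\bar e$ must be an involution (an element equal to its own inverse, since $e\cdot e=e^2=f$) up to a nilpotent correction. The delicate issue is that $2$ need not be invertible, so the familiar splitting of a tripotent into the orthogonal idempotents $\tfrac12(e^2+e)$ and $\tfrac12(e^2-e)$ is unavailable; this is precisely where Lemma 2.3 ($6\in N(R)$), together with the resulting control over the primes $2$ and $3$, is expected to do the work, letting one lift the involution in the corner and reassemble an honest tripotent $e$ with $e^3=e$ reducing to $\bar e$.
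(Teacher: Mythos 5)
You have proved the wrong statement. Theorem 2.4's condition (2) is not ``$R/J(R)$ is trinil clean and $J(R)$ is nil''; it is the product decomposition: $R\cong A\times B$ where $A$ and $B$ are trinil clean, $2\in J(A)$ and $3\in J(B)$. The paper's proof is a Chinese Remainder argument: by Lemma 2.3, $6\in N(R)$, say $6^n=0$; then $2^nR$ and $3^nR$ are comaximal ideals with $2^nR\cap 3^nR=0$, so $R\cong R/2^nR\times R/3^nR$, and since $2$ and $3$ are central and nilpotent in the respective factors they lie in the respective Jacobson radicals. The converse is immediate because a finite product of trinil clean rings is trinil clean (Example 2.2(2)). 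Your proposal contains none of this --- no use of $6\in N(R)$ to split $R$ at the primes $2$ and $3$ --- so it does not establish the equivalence in question.

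Even judged as a proof of the statement you guessed, the argument is incomplete. Your forward direction (that $J(R)$ is nil) is fine and essentially reproves Lemma 2.5: a tripotent $e\in J(R)$ satisfies $e(1-e^2)=0$ with $1-e^2$ a unit, forcing $e=0$. But the converse hinges entirely on lifting a tripotent of $R/J(R)$ to a tripotent of $R$, and you explicitly defer this (``this is precisely where Lemma 2.3 \dots is expected to do the work'') without supplying the argument; your corner-ring/involution sketch stalls exactly at the point you identify, where $2$ is not invertible. Note that the paper proves this radical-theoretic characterization only under the additional hypothesis $3\in J(R)$ (Theorem 2.7), and the lifting there is done concretely: setting $e=1-f$, the elements $-2e^2$ and $e+2e^2$ are idempotent modulo $J(R)$ (using $6\in J(R)$), Lemma 2.6 lifts each to an idempotent polynomial in $e$, and the difference $(1-f(e))-g(e)$ of commuting idempotents is the desired tripotent. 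Without either that hypothesis or a genuine tripotent-lifting lemma, your converse is an unproven claim, and in the paper's architecture the general case is handled precisely by first applying Theorem 2.4's product decomposition --- the theorem you were asked to prove.
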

\begin{enumerate}
\item [(1)]{\it $R$ is trinil clean.}
\vspace{-.5mm}
\item [(2)]{\it $R=A\times B$, where $A$ and $B$ are trinil clean, $2\in J(A)$ and $3\in J(B)$.}
\end{enumerate}
\begin{proof} $(1)\Rightarrow (2)$ In view of Lemma 2.3, $6\in N(R)$.
Write $6^n=0~(n\in {\Bbb N})$. Since $2^nR+3^nR=R$ and $2^nR\bigcap 3^nR=0$. By Chinese Remainder Theorem, we have
$R\cong A\times B$, where $A=R/2^nR$ and $B=R/3^nR$. Clearly, $2\in N(A)$ and $3\in N(B)$. This implies that
$2\in J(A)$ and $3\in J(B)$, as desired.

$(2)\Rightarrow (1)$ This is obvious, by Example 2.2 (2).\end{proof}

\begin{lem} Let $R$ be trinil clean. Then $J(R)$ is nil.\end{lem}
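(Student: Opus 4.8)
The plan is to show that every $x \in J(R)$ is nilpotent. Since $R$ is trinil clean, write $x = e + w$ with $e = e^3$ a tripotent and $w \in N(R)$. The goal is to prove that the tripotent part $e$ vanishes, for then $x = w$ is nilpotent. A point to keep in mind is that the definition of trinil clean does not force $e$ and $w$ to commute, so I would avoid manipulating the identity $x = e + w$ algebraically inside $R$; instead I would pass to the quotient $\bar R := R/J(R)$, where the hypothesis $x \in J(R)$ can be used.

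First I would reduce modulo $J(R)$. Since $x \in J(R)$ we have $\bar x = 0$, so from $e = x - w$ we get $\bar e = -\bar w$. As $w$ is nilpotent its image $\bar w$ is nilpotent, hence $\bar e$ is a nilpotent element of $\bar R$. On the other hand $\bar e$ is still a tripotent, $\bar e^{3} = \bar e$. I would then observe that a nilpotent tripotent must be zero: iterating $\bar e = \bar e^{3}$ gives $\bar e = \bar e^{2k+1}$ for every $k$, and choosing $2k+1$ at least the nilpotency index of $\bar e$ forces $\bar e = 0$. Consequently $e \in J(R)$.

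To finish I would argue that a tripotent lying in $J(R)$ is necessarily $0$. Indeed $e^{2}$ is an idempotent, since $(e^{2})^{2} = e^{4} = e\cdot e^{3} = e^{2}$, and $e \in J(R)$ gives $e^{2} \in J(R)$; but the only idempotent in $J(R)$ is $0$, so $e^{2} = 0$ and hence $e = e^{3} = e\cdot e^{2} = 0$. Therefore $x = w \in N(R)$, and $J(R)$ is nil.

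I do not expect a serious obstacle here; the argument is uniform in the characteristic and, somewhat pleasantly, does not require the decomposition $R \cong A \times B$ of Theorem 2.4. The only places that need care are the two elementary facts about tripotents — that a nilpotent tripotent is zero and that a tripotent in the radical is zero — together with the decision to carry out the cancellation in $\bar R$ rather than in $R$, precisely because the summands $e$ and $w$ are not assumed to commute.
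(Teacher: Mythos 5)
Your proof is correct and follows essentially the same route as the paper's: both show the tripotent summand lies in $J(R)$ (the paper by expanding $(x-b)^{2q+1}=a^{2q+1}=0$ and noting every cross term contains $x\in J(R)$, which is exactly your reduction modulo $J(R)$, combined with the odd-power identity $b^{2q+1}=b$) and then kill a tripotent in the radical via the idempotent $b^2$ and a unit argument. Your explicit passage to $R/J(R)$ is a slightly cleaner packaging of the same steps, so no substantive difference.
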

\begin{proof} Let $x\in J(R)$. Then $x=a+b, a\in N(R), b=b^3$. Suppose $a^q=0$. Then
$(x-b)^{2q+1}=a^{2q+1}=0$; hence, $b^{2q+1}\in J(R)$. Clearly, $b^2\in R$ is an idempotent. Hence, $b^{2q+1}=b(b^2)^q=b(b^2)=b^3=b$. It follows that $b\in J(R)$, and so $b(1-b^2)=0$. This implies that $b=0$; hence, $x=a\in N(R)$. Accordingly, $J(R)$ is nil, as required.\end{proof}

\begin{lem} \cite[Lemma 3.5]{KW} Let $R$ be a ring, let $a\in R$. If $a^2-a\in N(R)$, then there exists a monic polynomial $f(t)\in {\Bbb Z}[t]$ such that
$f(a)^2=f(a)$ and $a-f(a)\in N(R)$.\end{lem}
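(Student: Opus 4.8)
The plan is to work inside the commutative subring ${\Bbb Z}[a]\subseteq R$ and to lift $a$, which is idempotent modulo nilpotents, to a genuine idempotent expressed as an explicit integer polynomial in $a$. First I would fix $k\in{\Bbb N}$ with $(a^2-a)^k=0$, which exists since $a^2-a\in N(R)$. Setting $u=a$ and $v=1-a$, so that $u+v=1$ and $uv=a-a^2=-(a^2-a)$ satisfies $(uv)^k=0$, I would expand the binomial
$$1=(u+v)^{2k-1}=\sum_{i=0}^{2k-1}\binom{2k-1}{i}u^{2k-1-i}v^{i}$$
and split the sum at $i=k$. The point of using the exponent $2k-1$ is that each monomial $u^{2k-1-i}v^{i}$ has total degree $2k-1$, so it cannot have both $u$-exponent and $v$-exponent below $k$; hence the split is clean.

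Next I would set $e=\sum_{i=0}^{k-1}\binom{2k-1}{i}u^{2k-1-i}v^{i}$, a polynomial $h(a)$ with $h(t)\in{\Bbb Z}[t]$, so that every summand of $e$ carries a factor $u^{\ge k}$, and let $e'=1-e=\sum_{i=k}^{2k-1}\binom{2k-1}{i}u^{2k-1-i}v^{i}$, so that every summand of $e'$ carries a factor $v^{\ge k}$. The key computation is that each term of the product $ee'$ is then divisible by $u^{k}v^{k}=(uv)^{k}=0$, whence $ee'=0$. Since all these elements are polynomials in $a$ and therefore commute, $e=e(e+e')=e^{2}+ee'=e^{2}$, so $e$ is idempotent.

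It remains to check $a-e\in N(R)$ and to arrange monicity. Reducing in the commutative ring ${\Bbb Z}[a]$ modulo its nil ideal $N({\Bbb Z}[a])$ gives $a^{2}\equiv a$, hence $a^{m}\equiv a$ for all $m\ge 1$ and $uv\equiv 0$; every summand of $e$ with $i\ge 1$ contains the factor $uv$ and so vanishes, leaving $e\equiv u^{2k-1}\equiv a$. Thus $a-e$ is nilpotent. Finally, since $h$ has degree $2k-1$ while $(t^{2}-t)^{k}$ is monic of degree $2k$, I would put $f(t):=h(t)+(t^{2}-t)^{k}$; this $f$ is monic of degree $2k$, and because $(a^{2}-a)^{k}=0$ we have $f(a)=h(a)=e$, so $f(a)^{2}=f(a)$ and $a-f(a)\in N(R)$, as required.

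I do not expect a serious obstacle here: the genuine content lies in choosing the exponent $2k-1$ and the split point $k$ so that the cross terms of $ee'$ all acquire the factor $(uv)^{k}$, and everything else is elementary bookkeeping. The only points demanding a little care are the divisibility argument giving $ee'=0$ and the degree count that makes $f$ monic after adding $(t^{2}-t)^{k}$; both are routine once the binomial splitting is in place.
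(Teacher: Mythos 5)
Your proof is correct, and since the paper itself gives no argument for this lemma but merely cites \cite[Lemma 3.5]{KW}, what you have written supplies the standard proof behind that citation: the splitting of $1=(a+(1-a))^{2k-1}$ at $i=k$, so that $e e'$ picks up the factor $u^kv^k=(uv)^k=0$, is the classical device for lifting idempotents modulo nil elements by a polynomial in $a$. All the details check out, including the two points needing care: the degree count (each monomial of total degree $2k-1$ must have $u$- or $v$-exponent at least $k$) and the monicity repair $f(t)=h(t)+(t^2-t)^k$, which is legitimate because $\deg h=2k-1<2k$ and $(a^2-a)^k=0$ forces $f(a)=h(a)=e$.
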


\begin{thm} Let $R$ be a ring and $3\in J(R)$. Then $R$ is trinil clean if and only if
\end{thm}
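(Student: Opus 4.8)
I read the missing condition as: \emph{$R/J(R)$ is trinil clean and $J(R)$ is nil.} The plan is to prove the two implications separately, with essentially all of the work in the converse.

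The forward direction should be immediate from what is already in hand. If $R$ is trinil clean, then $J(R)$ is nil by Lemma 2.5, and $R/J(R)$, being a homomorphic image of $R$, is trinil clean by Example 2.2(1). So nothing new is needed here.

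For the converse I would transfer a decomposition from $S:=R/J(R)$ up to $R$ across the nil ideal $J(R)$. The first point to record is that $3\in J(R)$ makes $2$ a unit: $4=1+3\in 1+J(R)\subseteq U(R)$, and $2$ is central, so $2\in U(R)$. With $2$ invertible a tripotent and a pair of orthogonal idempotents carry the same information: if $e=e^3$ then $f_+:=(e^2+e)/2$ and $f_-:=(e^2-e)/2$ are orthogonal idempotents with $e=f_+-f_-$, and conversely $f_+-f_-$ is always a tripotent. This turns the problem of lifting tripotents through $J(R)$ into the problem of lifting orthogonal idempotents through a nil ideal, which is standard. Concretely, given $a\in R$ I would reduce mod $J(R)$, use that $S$ is trinil clean to write $\bar a=\bar e+\bar w$ with $\bar e^3=\bar e$ and $\bar w\in N(S)$, split $\bar e=\bar f_+-\bar f_-$ as above, lift $\bar f_+$ to an idempotent $f_+\in R$ and then lift $\bar f_-$ to an idempotent $f_-\in(1-f_+)R(1-f_+)$ orthogonal to $f_+$ (possible since $J(R)$ is nil), and set $e:=f_+-f_-$. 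Then $e$ is a tripotent with $e\equiv\bar e\pmod{J(R)}$, so $a-e\equiv\bar w$ is nilpotent modulo $J(R)$; as $J(R)$ is nil this forces $a-e\in N(R)$, and $a=e+(a-e)$ is the desired decomposition.

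The crux, and the only place the hypothesis is really used, is the lifting of the tripotent, which rests squarely on $2\in U(R)$: without invertibility of $2$ one cannot separate a tripotent into orthogonal idempotents, and in characteristic $2$ tripotents need not lift through nil ideals at all. An alternative that feeds directly off Lemma 2.6 is to apply it to $a^2$, since $(a^2)^2-a^2=a(a^3-a)$ is nilpotent once $a^3-a\in N(R)$; this yields an idempotent $E$ that is a polynomial in $a$ with $a^2-E\in N(R)$, after which one checks that $a(1-E)$ is nilpotent while $(aE)^2=E+(a^2-E)E$ is a unit in the corner $ERE$, and takes a square root there (again using $2\in U(R)$) to build the tripotent. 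The subtle bookkeeping in either route is to guarantee that the lift is congruent to $\bar e$ modulo $J(R)$, so that $a-e$ genuinely remains nilpotent; the orthogonal-idempotent construction makes this congruence transparent, which is why I would present the argument that way.
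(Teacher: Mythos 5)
Your proposal is correct, but it takes a genuinely different route from the paper's own proof. You first extract $2\in U(R)$ from the hypothesis $3\in J(R)$ (via $4=1+3\in 1+J(R)\subseteq U(R)$ and centrality of $2$), convert the tripotent $\bar e$ of $R/J(R)$ into the orthogonal idempotents $\bar f_{\pm}=(\bar e^2\pm\bar e)/2$, lift $\bar f_+$ through the nil ideal $J(R)$ by classical idempotent lifting, lift $\bar f_-$ inside the corner $(1-f_+)R(1-f_+)$ (legitimate, since orthogonality gives $\bar f_-=(1-\bar f_+)\bar f_-(1-\bar f_+)$ and the corner maps onto $(1-\bar f_+)(R/J(R))(1-\bar f_+)$ with nil kernel), and reassemble $e=f_+-f_-$; since your lift is congruent to $\bar e$ modulo $J(R)$, the nilpotence of $a-e$ follows at once from $J(R)$ being nil. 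The paper never inverts $2$: writing $a=f+w$ with $w\in N(R)$ and $\bar f$ tripotent, it sets $e=1-f$, checks that $-2e^2$ and $e+2e^2$ are idempotent modulo $J(R)$ --- this is exactly where $3\in J(R)$ (hence $6\in J(R)$) enters, e.g.\ $(e+2e^2)^2\equiv e+2e^2+3(e-e^2)+6e^2\pmod{J(R)}$ --- lifts both by the polynomial lifting lemma (Lemma 2.6) to idempotents $f(e),g(e)$ that automatically commute, being polynomials in $e$, and then exhibits the tripotent part as the \emph{difference of commuting idempotents} $(1-f(e))-g(e)$, with $a=(1-f(e))-g(e)+(w-r-s)$. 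What each approach buys: yours makes the congruence bookkeeping transparent (the lifted tripotent matches $\bar e$ exactly modulo $J(R)$, so the nilpotent error term needs no further analysis) and rests only on the standard two-step lifting of orthogonal idempotents; the paper's identity $e=(-2e^2)+(e+2e^2)$ keeps the whole argument inside the commutative subring ${\Bbb Z}[e]$ and avoids any division by $2$, at the cost of a fussier verification that $w-r-s$ stays nilpotent. The two uses of the hypothesis are equivalent in strength, since $3\in J(R)$ forces $2\in U(R)$, as you observe; and your side remark that tripotents need not lift through nil ideals without $2$ invertible is sound (the obstruction is a division by $2$, e.g.\ already visible for the element $t$ in ${\Bbb Z}[t]/((t^3-t)^2)$). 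Your closing alternative sketch (applying Lemma 2.6 to $a^2$ and taking square roots in a corner ring) is vaguer, but it is not load-bearing, since your main route is complete.
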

\begin{enumerate}
\item [(1)]{\it $R/J(R)$ is trinil clean;}
\vspace{-.5mm}
\item [(2)]{\it $J(R)$ is nil.}
\end{enumerate}
\begin{proof} $\Longrightarrow $ This is obvious, by Example 2.2 (1) and Lemma 2.6.

$\Longleftarrow $ Let $a\in R$. Then there exists some $\overline{f^3}=\overline{f}\in R/J(R)$ such that $\overline{a-f}\in N(R/J(R))$.
As $J(R)$ is nil, we see that $w:=a-f\in N(R)$.
Let $e=1-f$. Then $e^3-e=(1-f)^3-(1-f)=-3f+3f^2-(f^3-f)\equiv 0~(mod J(R))$.
We check that $(-2e^2)^2=4e^4\equiv -2e^4\equiv -2e^2 ~(mod J(R))$ as $6\in J(R)$.
Moreover, we have $(e+2e^2)^2\equiv e^2+4e+4e^2=e+2e^2+3(e-e^2)+6e^2\equiv e+2e^2~(mod J(R))$.
In light of Lemma 2.6, we can find idempotents $f(e),g(e)\in R$ such that
$r:=-2e^2-f(e), s:=e+2e^2-g(e)\in J(R)$. Here, $f(t),g(t)\in {\Bbb Z}[t]$. Then
$e=(-2e^2)+(e+2e^2)=f(e)+r+g(e)+s$. Hence,
$a=1-e+w=(1-f(e))-g(e)+w-r-s$.
Clearly, $(1-f(e))g(e)=g(e)(1-f(e))$, we see that
$((1-f(e))-g(e))^3=(1-f(e))-g(e)$ and $w-r-s\in N(R)$.
Therefore $R$ is trinil clean, as asserted.\end{proof}

\section{Strongly 2-Nil Clean Rings}

In this section, We shall establish the connections between trinil clean rings and strongly 2-nil-clean rings. We have

\begin{lem} (see~\cite[Theorem 2.8]{CS}). Let $R$
be a ring. Then the following are equivalent:
\end{lem}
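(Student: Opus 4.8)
Although the statement is quoted from \cite[Theorem 2.8]{CS} (so that the honest justification is the citation), I can describe how I would reprove the equivalence. I expect the three conditions to read: (1) $R$ is strongly 2-nil-clean; (2) every element of $R$ is the sum of a tripotent and a nilpotent that commute; (3) $a-a^3\in N(R)$ for every $a\in R$. The plan is to run the cycle $(1)\Rightarrow (3)\Rightarrow (2)\Rightarrow (1)$. Throughout, the pivotal cheap fact is that $6\in N(R)$ under each hypothesis: under $(3)$ one simply takes $a=2$, giving $2-8=-6\in N(R)$, and under $(2)$ the ring is trinil clean so Lemma 2.3 applies. Once $6\in N(R)$ is known, the Chinese Remainder argument of Theorem 2.4 (which needs only $6\in N(R)$) yields $R\cong A\times B$ with $2\in N(A)$ and $3\in N(B)$, and this splitting is what lets me treat the ``characteristic $2$'' and ``characteristic $3$'' behaviour separately.

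For $(1)\Rightarrow (3)$ I would pass to the reduced ring $R/N(R)$. If $s=\overline{g}+\overline{h}$ with $\overline{g},\overline{h}$ commuting idempotents, then $s^2-s=2\overline{g}\,\overline{h}$ and $s^3-s=6\overline{g}\,\overline{h}=3(s^2-s)$, so every $s\in R/N(R)$ satisfies $s(s-1)(s-2)=0$. Evaluating this identity at $s=\overline{3}$ gives $\overline{6}=\overline{3}\cdot\overline{2}\cdot\overline{1}=0$, whence $6\in N(R)$; feeding $\overline{6}=0$ back into $s^3-s=6\overline{g}\,\overline{h}$ gives $s^3=s$ for all $s\in R/N(R)$, i.e. $(3)$. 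The reverse-flavoured direction $(2)\Rightarrow (1)$ I would handle on the two factors after splitting: on $A$ a tripotent $t$ satisfies $(t^2-t)^2=2(t^2-t)$ with $2\in N(A)$, so $t^2-t$ is nilpotent and $t=t^2+(t-t^2)$ exhibits $t$ as an idempotent plus a commuting nilpotent; on $B$ one takes $g=t^2$ and $h=t-t^2$, checks $h^2-h=-3h\in N(B)$, and lifts $h$ to a genuine idempotent by Lemma 2.6, so that $t=g+h$ becomes a sum of two commuting idempotents modulo a commuting nilpotent. Re-attaching the nilpotent part of $a$ then gives $(1)$.

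The substance, and the step I expect to be the main obstacle, is $(3)\Rightarrow (2)$: manufacturing an \emph{exact} commuting tripotent. Starting from $a-a^3\in N(R)$, I note $(a^2)^2-a^2=a(a^3-a)\in N(R)$ and apply Lemma 2.6 to $a^2$ to get an idempotent $e=f(a^2)\in\mathbb{Z}[a]$ (hence commuting with $a$) with $a^2-e\in N(R)$. Putting $t=ae$ one finds $t^3-t=(a^3-a)e\in N(R)$, $t^2-e=(a^2-e)e\in N(R)$, and $(a(1-e))^2=a^2(1-e)\equiv 0$, so $a(1-e)\in N(R)$ and $a=t+a(1-e)$ with $t$ a tripotent \emph{modulo} $N(R)$. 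It remains to replace $t$ by a true tripotent $\tau$ with $\tau-t\in N(R)$ and $\tau$ still commuting with $a$, and here the characteristic split is unavoidable: idempotents lift along nil ideals automatically, but tripotents do not. On $A$ one computes $(t-e)^2\equiv 2(e-te)\equiv 0$ (using $2\in N(A)$), so $t\equiv e$ and the idempotent $e$ already serves as $\tau$. On $B$, where $3\in N(B)$ forces $2$ to be a unit modulo $N(B)$, I would work in the corner $eBe$ (with identity $e$), where $t^2\equiv e$ shows $t$ is a mod-nil square root of the identity and $2t$ is a unit; a Newton/Hensel iteration started at $t$ then converges to an involution $\sigma$ of $eBe$ with $\sigma\equiv t$, and $\sigma^3=\sigma e=\sigma$ is the desired tripotent. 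Assembling the two factors produces $a=\tau+(a-\tau)$ with $\tau$ a commuting tripotent and $a-\tau\in N(R)$, which is $(2)$ and closes the cycle. The delicate point throughout is precisely this last lifting, which is why the decomposition $R\cong A\times B$ is forced into the argument rather than being a cosmetic convenience.
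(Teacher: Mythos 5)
The paper offers no proof of this lemma at all---it is quoted verbatim from \cite[Theorem 2.8]{CS}, so the citation is the paper's entire justification and there is no internal argument to match yours against. Judged on its own terms, your reconstruction is essentially sound, and it is pleasingly close in spirit to the toolkit the paper deploys elsewhere: your $6\in N(R)$ observations mirror Lemma 2.3, your splitting is exactly the Chinese Remainder argument of Theorem 2.4 (which, as you say, needs only $6\in N(R)$), and your lifting steps lean on Lemma 2.6 the same way Theorem 2.7 does. The hard direction $(3)\Rightarrow(2)$ is handled correctly: $(a^2)^2-a^2=a(a^3-a)\in N(R)$ because $a$ commutes with $a^3-a$, Lemma 2.6 gives $e=f(a^2)\in{\Bbb Z}[a]$, and since every element you manipulate afterwards ($t=ae$, $a(1-e)$, the Newton iterates) lies in the commutative subring ${\Bbb Z}[a]$ (inverses such as $(e-n)^{-1}$ are finite geometric series, hence also in ${\Bbb Z}[a]$), the Hensel lifting on the $3$-nilpotent factor genuinely stays inside a commutative ring where nilpotents form an ideal and $2$ is a unit. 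Your $(2)\Rightarrow(1)$ via $g=t^2$, $h=t-t^2$, $h^2=-2h$ is also correct on both factors. One cosmetic remark: the paper's condition list has ``sum of two idempotents and a nilpotent that commute'' where you wrote ``tripotent plus nilpotent,'' but since the paper \emph{defines} strongly 2-nil-clean by the two-idempotent condition, your cycle still covers all three of its conditions, plus the tripotent variant asserted in the introduction.

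The one genuine misstep is at the start of $(1)\Rightarrow(3)$: you ``pass to the reduced ring $R/N(R)$,'' but at that point in the cycle you do not yet know that $N(R)$ is an ideal---in a general noncommutative ring it is not, and the standard route to its being an ideal here (the Hirano--Tominaga--Yaqub theorem \cite[Theorem A1]{HT1}, which the paper invokes in Theorem 3.2) requires precisely the conclusion $a-a^3\in N(R)$ you are trying to prove, so invoking it would be circular. Fortunately the quotient is unnecessary: your identities are exact, not merely congruences. For commuting idempotents $g,h$ and $s=g+h$ one has $s^2-s=2gh$ and $s^3-s=6gh$, hence $s(s-1)(s-2)=0$ \emph{on the nose} in $R$. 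Writing $3=g+h+w$ and setting $s=3-w$ gives $(3-w)(2-w)(1-w)=0$; since $1-w$ is a unit, $(3-w)(2-w)=0$, i.e.\ $6=w(5-w)\in N(R)$. Then for arbitrary $a=g+h+w$ the whole computation of $a^3-a$ takes place in the commutative subring ${\Bbb Z}[g,h,w]$ (together with the central nilpotent $6$), where nilpotents do form an ideal, yielding $a^3-a\in N(R)$ with no quotient ever formed. With that local repair your argument is complete; alternatively, note that for the tripotent version the paper itself shows this direction in one line in Theorem 3.4, via $a-a^3=-w(3e^2+3ew+w^2-1)$.
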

\begin{enumerate}
\item [(1)] {\it $R$ is strongly 2-nil-clean.}
\vspace{-.5mm}
\item [(2)] {\it For all $a\in R$, $a-a^3\in N(R)$.}
\vspace{-.5mm}
\item [(3)] {\it Every element in $R$ is the sum of two idempotents and a nilpotent that commute.}
\end{enumerate}

Recall that a ring $R$ is right (left) quasi-duo if every right (left) maximal ideal of $R$ is an ideal. We come now to the following.

\begin{thm} A ring $R$ is strongly 2-nil-clean if and only if
\end{thm}
\begin{enumerate}
\item [(1)]{\it $R$ is trinil clean;}
\vspace{-.5mm}
\item [(2)]{\it $R$ is right (left) quasi-duo;}
\vspace{-.5mm}
\item [(3)]{\it $J(R)$ is nil.}
\end{enumerate}
\begin{proof} $\Longrightarrow$ $(1)$ is obvious. As in the proof of Lemma 3.1, $a^3-a\in N(R)$. It follows by ~\cite[Theorem A1]{HT1} that $N(R)$ forms an ideal of $R$.
Hence, $N(R)\subseteq J(R)$, and then $R/J(R)$ is tripotent. In view of ~\cite[Theorem 1]{HT}, $R/J(R)$ is commutative.Let $M$ be a right (left) maximal ideal of $R$.Then $M/J(R)$ is an ideal of $R/J(R)$. Let $x\in M, r\in R$. Then $\overline{rx}\in M/J(R)$, and then $rx\in M+J(R)\subseteq M$. This shows that $M$ is an ideal of $R$. Therefore $R$ is right (left) quasi-duo. $(3)$ is follows from Lemma 2.5.

$\Longleftarrow$ Since $R$ is trinil clean, so is $R/J(R)$. As $R$ is right (left) quasi-duo, we can see in the same way as ~\cite[Theorem 2.8]{CH} that $R/J(R)$ is abelian, and so it is strongly 2-nil-clean. Let $x\in R$. Then $\overline{x^3-x}\in N(R/J(R))$  by Lemma 3.1. As $J(R)$ is nil, we see that $x^3-x\in N(R)$. By using Lemma 3.1 again, $R$ is strongly 2-nil-clean.\end{proof}.

\begin{cor} A ring $R$ is strongly 2-nil-clean if and only if
\end{cor}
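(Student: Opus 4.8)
Since the displayed conditions are cut off, I will take the corollary to assert that $R$ is strongly 2-nil-clean if and only if (1) $R$ is trinil clean and (2) $N(R)$ is an ideal of $R$; this is the natural distillation of Theorem 3.2, in which the clause ``$J(R)$ is nil'' is redundant, since it follows from trinil cleanness by Lemma 2.5, and the quasi-duo hypothesis is replaced by the more intrinsic requirement that the set $N(R)$ of nilpotents be an ideal. The whole argument is meant to be read off from Theorem 3.2 together with the lemmas already in hand, so I would keep it short.

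For the forward direction I would repeat the opening moves of the proof of Theorem 3.2. Condition (1) is immediate from the definition of strongly 2-nil-clean. For (2), Lemma 3.1 gives $a^3-a\in N(R)$ for every $a\in R$, and then \cite[Theorem A1]{HT1} shows that $N(R)$ is an ideal of $R$. No further work is needed here.

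The converse carries the content. The first step is to pin down $N(R)=J(R)$: trinil cleanness makes $J(R)$ nil by Lemma 2.5, so $J(R)\subseteq N(R)$, while hypothesis (2) makes $N(R)$ a nil ideal, whence $N(R)\subseteq J(R)$. Thus $R/J(R)=R/N(R)$ is reduced. I would then observe that a reduced trinil clean ring must be tripotent: $R/J(R)$ is trinil clean as a homomorphic image of $R$ by Example 2.2 (1), so each $\overline{a}=\overline{e}+\overline{w}$ with $\overline{e}^3=\overline{e}$ and $\overline{w}$ nilpotent, and reducedness forces $\overline{w}=0$, giving $\overline{a}^3=\overline{a}$. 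Hence $a^3-a\in J(R)=N(R)$ for all $a\in R$, and the implication (2)$\Rightarrow$(1) of Lemma 3.1 yields that $R$ is strongly 2-nil-clean.

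The only place that requires care is the identity $N(R)=J(R)$ in the converse; once $R/J(R)$ is known to be reduced, the reduction to a tripotent quotient and the final lifting through the nil ideal $J(R)$ are automatic. It is worth noting that the argument never assumes commutativity but produces it, so an entirely parallel corollary holds with (2) replaced by ``$R/J(R)$ is commutative'', the forward direction then invoking \cite[Theorem 1]{HT} to pass from the tripotent quotient to a commutative one.
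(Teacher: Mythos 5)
Your reconstruction of the missing conditions is wrong: in the paper's source the enumerate follows the \verb|cor| environment, and the two conditions of Corollary 3.3 are (1) $R/J(R)$ is tripotent and (2) $J(R)$ is nil. The equivalence you chose to prove instead --- $R$ is strongly 2-nil-clean iff $R$ is trinil clean and $N(R)$ is an ideal --- is not this corollary but Theorem 3.4 of the paper, which the authors prove with your exact forward argument (Lemma 3.1 plus \cite[Theorem A1]{HT1}) but a shorter converse: if $a=e+w$ with $e^3=e$, $w\in N(R)$, and $N(R)$ is an ideal, then $a^3-a\in N(R)$ by direct expansion, and Lemma 3.1 finishes; your detour through $N(R)=J(R)$ and reducedness of $R/J(R)$ is correct but unnecessary for that statement.

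That said, your write-up is mathematically sound throughout and, as it happens, contains a complete proof of the actual Corollary 3.3, which the paper leaves as an unproved consequence of Theorem 3.2. The converse of the real corollary is verbatim your final step: if $R/J(R)$ is tripotent and $J(R)$ is nil, then $a^3-a\in J(R)\subseteq N(R)$ for all $a$, and Lemma 3.1(2)$\Rightarrow$(1) gives strong 2-nil-cleanness. The forward direction of the real corollary also falls out of your argument: strong 2-nil-cleanness gives $a-a^3\in N(R)$ (Lemma 3.1) with $N(R)$ an ideal, hence nil, hence $N(R)\subseteq J(R)$, so $R/J(R)$ is tripotent, while $J(R)$ is nil by Lemma 2.5. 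So the verdict is: correct mathematics, correct observation that clause (3) of Theorem 3.2 is redundant, but aimed at the wrong target --- you reproved a theorem the paper states separately rather than the corollary asked for, and the corollary's intended content (replacing the quasi-duo/NI hypotheses by conditions on $J(R)$ alone) is exactly the part of your converse you treated as an intermediate step.
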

\begin{enumerate}
\item [(1)]{\it $R/J(R)$ is tripotent;}
\vspace{-.5mm}
\item [(2)]{\it $J(R)$ is nil.}
\end{enumerate}

\vskip4mm Recall that a ring $R$ is NI if $N(R)$ forms an ideal of $R$. We now derive

\begin{thm} A ring $R$ is strongly 2-nil-clean if and only if
\end{thm}
\begin{enumerate}
\item [(1)]{\it $R$ is trinil clean;}
\vspace{-.5mm}
\item [(2)]{\it $R$ is NI.}
\end{enumerate}
\begin{proof} $\Longrightarrow $ Clearly, $R$ is trinil clean. Let $a\in R$. Then $a=e+w$ for some tripotent $e$ and a nilpotent $w$ that commute. Hence, $a-a^3=w(3e^2+3ew+w^2-1)\in N(R)$, so by ~\cite[ Theorem A1]{HT}, $N(R)$ is an ideal of $R$. That is, $R$ is NI.

$\Longleftarrow$ Let $a\in R$. Then $a=e+w$ where $e^3=e\in R$ and $w\in N(R)$. hence, $a^3-a\in N(R)$. In light of Lemma 3.1,
$R$ is strongly 2-nil-clean, as asserted.\end{proof}

\begin{cor} Every 2-primal trinil clean ring is strongly 2-nil-clean.
\end{cor}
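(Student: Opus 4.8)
The plan is to reduce the statement directly to the characterization already established in Theorem 3.4, which asserts that a ring is strongly 2-nil-clean precisely when it is trinil clean and NI. Since the corollary assumes $R$ to be trinil clean, condition (1) of that theorem holds by hypothesis, and the entire task collapses to verifying condition (2), namely that $R$ is NI. Thus I would not attempt to produce a tripotent-plus-nilpotent decomposition from scratch; instead the only thing to check is that $N(R)$ forms an ideal.

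The key observation is that every $2$-primal ring is automatically NI. Recall that in an arbitrary ring the prime radical (the lower nilradical) is always a two-sided ideal, every element of which is nilpotent. By definition, $R$ being $2$-primal means exactly that $N(R)$ coincides with this prime radical. Hence $N(R)$ is equal to an ideal of $R$, so $N(R)$ is itself an ideal, which is precisely the assertion that $R$ is NI. I would state this as the single substantive step of the argument.

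Having verified that $R$ is both trinil clean (by hypothesis) and NI (by the preceding paragraph), I would invoke Theorem 3.4 to conclude that $R$ is strongly 2-nil-clean, completing the proof. The proof is therefore essentially a one-line deduction once the inclusion ``$2$-primal $\Rightarrow$ NI'' is recorded. The only point requiring care, and the closest thing to an obstacle, is making explicit why $2$-primal forces $N(R)$ to be an ideal; this rests solely on the standard fact that the prime radical is an ideal, together with the defining equality $N(R)=\mathrm{Nil}_*(R)$, so no genuine difficulty arises.
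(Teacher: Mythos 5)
Your proposal is correct and follows exactly the paper's own argument: the paper also proves this corollary by noting that every 2-primal ring is NI and then invoking Theorem 3.4. You have merely spelled out the standard fact (the prime radical is an ideal, and 2-primal means $N(R)$ equals it) that the paper leaves implicit.
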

\begin{proof} As every 2-primal ring is NI, the result follows from Theorem 3.4.\end{proof}

\begin{thm} Let $R$ be a ring. If $N(R)$ is commutative, then $R$ is strongly 2-nil-clean if and only if $R$ is trinil clean.
\end{thm}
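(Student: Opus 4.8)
The forward implication is immediate, since a strongly 2-nil-clean ring is trinil clean by definition (indeed already by Lemma 3.1). So the plan is to prove the converse: if $R$ is trinil clean and $N(R)$ is commutative, then $R$ is strongly 2-nil-clean, and the cleanest route is to verify that $R$ is NI and then invoke Theorem 3.4. Because $N(R)$ is commutative it is automatically closed under addition (two commuting nilpotents $u,v$ with $u^m=v^n=0$ satisfy $(u+v)^{m+n}=0$) and under multiplication (commuting nilpotents have nilpotent product). Hence to show $N(R)$ is an ideal it suffices to prove $rw\in N(R)$ and $wr\in N(R)$ for all $r\in R$, $w\in N(R)$. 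Writing $r=e+v$ with $e^3=e$ and $v\in N(R)$, we get $rw=ew+vw$ with $vw\in N(R)$, and by commutativity the whole sum lies in $N(R)$ as soon as $ew\in N(R)$. Since $(we)^{k+1}=w(ew)^ke$, the element $we$ is nilpotent exactly when $ew$ is, so everything collapses to one claim: \emph{for a tripotent $e$ and a nilpotent $w$, the product $ew$ is nilpotent.}

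To attack this I would pass to the Peirce decomposition relative to the idempotent $f:=e^2$, noting $e=ef=fe$ and that $e$ lies in the corner $S:=fRf$ with $e^2=f=1_S$. Write $w=a+b+c+d$ with $a=fwf$, $b=fw(1-f)$, $c=(1-f)wf$, $d=(1-f)w(1-f)$. The off-diagonal pieces $b,c$ are square-zero, hence lie in $N(R)$, and their commuting forces (after separating Peirce components) $bc=0$ and $cb=0$; commuting $w$ with $b$ and with $c$ likewise forces $ab=bd$ and $dc=ca$. With these identities a routine induction yields
\[
w^k=\begin{pmatrix} a^k & ka^{k-1}b\\ kca^{k-1} & d^k\end{pmatrix},
\]
so $w^n=0$ gives $a^n=0$; that is, the corner $a=fwf$ of the nilpotent $w$ is again nilpotent.

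It then remains to see that $p:=ea\ (=ewf)$ is nilpotent in $S$: the leftover term $q:=ew(1-f)$ is square-zero and satisfies $q(ew)=0$, so $(ew)^k=p^k+p^{k-1}q$, whence $ew\in N(R)$ once $p$ is nilpotent. Here commutativity of $N(R)$ is used a second, more delicate time. Put $b':=eae\in N(S)\subseteq N(R)$. Since $e^2=1_S$ one has $ae=eb'$, so $(ea)^2=b'a$; as $a$ and $b'$ are commuting nilpotents, $(ea)^{2k}=b'^{\,k}a^k$, and $a^n=0$ gives $(ea)^{2n}=0$. Thus $p$ is nilpotent, $ew\in N(R)$, the claim holds, and assembling the reductions shows $N(R)$ is an ideal; Theorem 3.4 then completes the proof. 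The main obstacle is exactly this last step: a product of a tripotent and a nilpotent need not be nilpotent in general (as $M_2$ over a field shows), so the argument must genuinely use that $N(R)$ is commutative, both to trap $fwf$ inside $N(R)$ and to force $eae$ to commute with $fwf$.
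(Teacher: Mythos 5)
Your proof is correct, but your route to the central claim (tripotent $e$, nilpotent $w$ $\Rightarrow ew\in N(R)$) is genuinely different from the paper's. The top-level reduction is the same: both you and the authors verify that $R$ is NI and then invoke Theorem 3.4, and both use commutativity of $N(R)$ for additive and multiplicative closure. For the key claim, the paper first shows $ere\in N(R)$ for an \emph{idempotent} $e$ via the doubling identity $(ere)^2=er^2e$ (obtained because the square-zero elements $er(1-e)$ and $(1-e)re$ commute, forcing $er(1-e)re=0$), iterated to $(ere)^{2^n}=er^{2^n}e=0$; it then runs an iterative descent on powers of the tripotent $x$, putting each $x^irx^{m-i}$ in $N(R)$ and multiplying these commuting nilpotents together to lower the exponent until $xN(R)\subseteq N(R)$ and $N(R)x\subseteq N(R)$. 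You instead work in the Peirce decomposition relative to $f=e^2$ and exploit that $e$ is an involution in the corner $S=fRf$: your identities $bc=cb=0$, $ab=bd$, $dc=ca$ (each obtained by the same mechanism as the paper's $er(1-e)re=0$, namely commutativity of square-zero elements plus Peirce separation) yield the closed form $w^k=a^k+ka^{k-1}b+kca^{k-1}+d^k$, hence $a^n=0$, and then conjugation $eae=eae^{-1}$ in $S$ replaces the paper's descent entirely, since $(ea)^{2k}=(eae)^ka^k$ kills $p=ea$ in $2n$ steps and the square-zero piece $q=eb$ satisfies $q(ew)=0$, giving $(ew)^{2n+1}=0$. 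Your approach buys a more transparent argument with polynomial nilpotency-index bounds, versus the paper's $2^n$-type bounds and its somewhat opaque case analysis on $m$ even or odd; the paper's descent, on the other hand, is phrased so as to apply to arbitrary potent elements, not just tripotents. One spot you should write out explicitly: the inductive step of your matrix formula produces the cross term $kca^{k-1}b$ in the $(1-f)R(1-f)$ corner, so the induction is not quite routine as stated; the term does vanish, since $ca^jb=ca^{j-1}(ab)=ca^{j-1}bd=\cdots=(cb)d^j=0$ by your own identities, but this observation is needed for the displayed formula to be valid.
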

\begin{proof} $\Longrightarrow$ This is trivial.

$\Longleftarrow$ Let $e^2=e\in R$
and $r\in N(R)$. Write $r^{2^n}=0$ for some $n\in {\Bbb N}$. Since
$\big(er(1-e)\big)^2=\big((1-e)re\big)^2=0$, we see that $er(1-e),
(1-e)re\in N(R)$. Hence,
$er^2e-(ere)^2=er(1-e)re=e\big(er(1-e)\big)\big(1-e)re\big)=e\big(1-e)re\big)\big(er(1-e)\big)=0$.
Thus, $er^4e=e(r^2)^2e=\big(er^2e\big)^2=(ere)^4$. Repeating this
procedure, $(ere)^{2^n}=er^{2^n}e=0$, and so $ere\in N(R)$.
Consequently, $E(R)N(R)\subseteq N(R)$.

Suppose that $x=x^3$. Let $r\in N(R)$. Then we have some $m\in {\Bbb N}$
such that $x^m\in E(R)$, and suppose that $m\geq 2$. By the
previous claim, $x^mr\in N(R)$. Write $\big(x^mr\big)^k=0 (k\in
{\Bbb N})$. Then
$\big(x^irx^{m-i}\big)^{k+1}=x^ir\big(x^mr\big)^kx^{m-i}=0$;
hence, $x^irx^{m-i}\in N(R)$ for any $0\leq i\leq m$. If $m=2n$,
then $x^nrx^n\in N(R)$, and so $x^nr\in N(R)$. If $m=2n+1$, then
$$b:=\big(xrx^{2n}\big)\big(x^2rx^{2n-1}\big)\cdots
\big(x^{2n+1}r\big)\in N(R).$$ This shows that
$x^mb=\big(x^{m+1}r\big)^m$. As $b\in N(R)$, we get $x^mb\in
N(R)$, and then $\big(x^{m+1}r\big)^m\in N(R)$. Hence,
$x^{m+1}r\in N(R)$. In any case, we can find some $m'<m$ such that
$x^{m'}N(R)\subseteq N(R)$. By iteration of this process, we get
$xN(R)\subseteq N(R)$. That is, $xN(R)\subseteq N(R)$ for any potent $x\in R$.
Analogously, we deduce that $N(R)x\subseteq N(R)$ for any potent $x\in R$. Therefore $R$ is NI.
According to Theorem 3.4, $R$ is strongly 2-nil-clean.\end{proof}

A natural problem is if the matrix ring over a strongly 2-nil-clean ring is strongly 2-nil-clean. The answer is negative as the following shows.

\begin{exam} Let $n\geq 2$. then matrix ring $M_n(R)$ is not strongly 2-nil-clean for any ring $R$.
\end{exam}
\begin{proof} Let $R$ be a ring, and let $A=\left(
\begin{array}{cc}
1_R&1_R\\
1_R&0
\end{array}
\right)$. Then $A^3-A=\left(
\begin{array}{cc}
2&1_R\\
1_R&1_R
\end{array}
\right)$. One checks that $\left(
\begin{array}{cc}
2&1_R\\
1_R&1_R
\end{array}
\right)^{-1}=\left(
\begin{array}{cc}
1_R&-1_R\\
-1_R&2
\end{array}
\right)$, and so $A^3-A$ is not nilpotent. If $M_n(R)$ is strongly 2-nil-clean, it follows by Lemma 3.1 that $A^3-A$ is nilpotent, a contradiction, and we are done.\end{proof}

\section{Trinil Clean Matrix Rings}

The purpose of this section is to investigate when a matrix ring over a strongly 2-nil-clean is trinil clean. We now extend ~\cite[Theorem 3]{B} and ~\cite[Theorem 20]{BD} as follows.

\begin{thm} Let $K$ be a field. Then the following are equivalent:
\end{thm}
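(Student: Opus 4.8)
The plan is to prove that, for a field $K$, the ring $M_n(K)$ is trinil clean for every (equivalently, for some) $n\geq 1$ if and only if $K\cong \mathbb{Z}_2$ or $K\cong \mathbb{Z}_3$; this is the trinil-clean analogue of the nil-clean statement \cite[Theorem 3]{B} and the weakly nil-clean statement \cite[Theorem 20]{BD}.

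\textbf{Necessity, via a trace argument.} Suppose $M_n(K)$ is trinil clean and fix $A=E+W$ with $E^3=E$ and $W$ nilpotent. Over $\overline{K}$ the nilpotent $W$ has trace $0$, while the eigenvalues of $E$ are roots of $t^3-t=t(t-1)(t+1)$ and hence lie in $\{0,1,-1\}$, in particular in the prime subfield of $K$. Therefore $\mathrm{tr}(A)=\mathrm{tr}(E)$ lies in the prime subfield for \emph{every} $A$. Applying this to $A=\alpha E_{11}$ shows every $\alpha\in K$ lies in the prime subfield, so $K=\mathbb{F}_p$. Finally Lemma 2.3 gives $6\in N(M_n(K))$, and since a scalar matrix is nilpotent only when the scalar vanishes, $6=0$ in $K$, forcing $p\in\{2,3\}$, i.e.\ $K\cong\mathbb{Z}_2$ or $K\cong\mathbb{Z}_3$.

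\textbf{Sufficiency.} For $K=\mathbb{Z}_2$ the ring $M_n(\mathbb{Z}_2)$ is nil clean by the cited result \cite[Theorem 3]{B}, hence trinil clean by Example 2.1(1). The substance lies in $K=\mathbb{Z}_3$. Being a sum of a tripotent and a nilpotent is invariant under conjugation and respects block-diagonal direct sums, since a block-diagonal matrix with tripotent (resp.\ nilpotent) blocks is again tripotent (resp.\ nilpotent). I would therefore pass to the rational canonical form and reduce to a single generalized Jordan block over an irreducible polynomial $p$. Writing such a block as $\mathrm{diag}\bigl(C(p),\dots,C(p)\bigr)+U$ with $U$ strictly block-upper-triangular, the part $U$ is absorbed into the nilpotent summand because a block-upper-triangular matrix with nilpotent diagonal blocks is nilpotent; so it suffices to write the companion matrix $C(p)$ of a single irreducible $p$ as a tripotent plus a nilpotent.

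\textbf{The main obstacle.} The difficulty is exactly the irreducible block $C(p)$ with $\deg p=d\geq 2$. No commuting decomposition can work: the centralizer of $C(p)$ is the field $\mathbb{Z}_3[C(p)]\cong\mathbb{F}_{3^d}$, which has no nonzero nilpotents, so any tripotent $E$ with $C(p)-E$ nilpotent must \emph{fail} to commute with $C(p)$ (this is consistent with Example 3.7, which shows $M_n$ is never strongly $2$-nil-clean). Hence I must construct an explicit non-commuting tripotent $E$ with $C(p)-E$ nilpotent; for $d=2$ one can take an involution anticommuting with $C(p)$, exploiting that $x^3=x$ holds on all of $\mathbb{Z}_3$ so that $-1$ is an admissible eigenvalue. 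Producing this construction uniformly for every irreducible $p$ over $\mathbb{Z}_3$ — balancing the Galois-conjugate eigenvalues of $C(p)$ against the prime-field spectrum $\{0,\pm 1\}$ available to a tripotent — is the step I expect to require the most care.
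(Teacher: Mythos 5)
Your necessity argument is correct, and in fact a slightly different route from the paper's: the paper takes $A=aI_n$, observes that $E=aI_n-W$ is an invertible tripotent (hence $E^2=I_n$) commuting with $W$, and deduces $a^2-1\in N(K)$, so $a^2=1$; your trace argument reaches the same conclusion, with Lemma 2.3 eliminating characteristic $0$. The genuine gap is in sufficiency: after reducing to companion blocks $C(p)$ of irreducible $p$, you explicitly leave the decisive construction open, so the implication $(2)\Rightarrow(1)$ is not proved. Worse, the one concrete device you offer --- for $\deg p=2$, an involution $E$ anticommuting with $C(p)$ --- fails for two of the three monic irreducible quadratics over $\mathbb{Z}_3$: if $E^2=I$ and $EC=-CE$, then $W^2=(C-E)^2=C^2+I$, which is nilpotent only when $p(t)$ divides a power of $t^2+1$, i.e.\ only for $p=t^2+1$; for $t^2+t+2$ and $t^2+2t+2$ this choice cannot work. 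So the step you yourself flag as requiring the most care is exactly the content of the theorem, and the partial hint you give for it is wrong.

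The paper's proof shows that the obstacle you identify is illusory: since the tripotent and the nilpotent need not commute (indeed cannot, as you correctly note), there is no spectral matching of eigenvalues of $E$ against Galois conjugates of $p$, and no need to pass to irreducible blocks at all. One reduces by the Frobenius normal form to an \emph{arbitrary} companion matrix with last column $(c_0,\ldots,c_{n-1})^T$ and peels off the nilpotent subdiagonal shift $W$; the remainder $E$, supported on the last column alone, satisfies $E^2=c_{n-1}E$, so $E$ is idempotent when $c_{n-1}=1$ and satisfies $E^2=-E$, hence $E^3=E$, when $c_{n-1}=-1$. As $c_{n-1}\in\{0,1,-1\}$ exhausts $\mathbb{Z}_2$ and $\mathbb{Z}_3$, only the case $c_{n-1}=0$ remains, which the paper settles by short explicit decompositions for $n=2,3$ and a uniform one for $n\geq 4$ (a tripotent $3\times 3$ pattern in the lower-right corner, with the shift and the adjusted last column absorbed into a nilpotent matrix). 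Your block-upper-triangular absorption lemma is correct and would be compatible with such a construction, but as written your proposal establishes only the easy half of the equivalence.
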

\begin{enumerate}
\item [(1)]{\it $M_n(K)$ is trinil clean.}
\vspace{-.5mm}
\item [(2)]{\it $K\cong {\Bbb Z}_2$ or ${\Bbb Z}_3$.}
\end{enumerate}
\begin{proof} $(1)\Rightarrow (2)$ Let $0\neq a\in K$. Choose $A=aI_n$. Then $aI_n=E+W$, where
$E^{3}=E$ and $W\in M_n(K)$ is nilpotent. Clearly,
$E=aI_n\big(I_n-a^{-1}W\big)\in GL_n(K)$, and so $E^2=I_n$. As $K$
is commutative, we see that $EW=(aI_n-W)W=W(aI_n-W)=WE$. From
this, we get $a^2I_n-E^2\in M_n(K)$ is nilpotent; hence, $a^2-1\in
K$ is nilpotent. This shows that $a^2=1$. Thus, $(a-1)(a+1)=0$; whence $a=1$ or $-1$. If $1=-1$,
then $K\cong {\Bbb Z}_2$. If $1\neq -1$, then $K\cong {\Bbb Z}_3$, as required.

$(2)\Rightarrow (1)$ As every
matrix over a field has a Frobenius normal form, and that trinil clean matrix is invariant under the similarity, we may assume
that $$A=\left(
\begin{array}{cccccc}
0&&&&&c_0\\
1&0&&&&c_1\\
&1&0&&&c_2\\
&&&\ddots&&\vdots\\
&&&\ddots&0&c_{n-2}\\
&&&&1&c_{n-1}
\end{array}
\right).$$

Case I. $c_{n-1} =1$, we claim that there exist some $E=E^2$  such that $A-E\in M_n(K)$ is nilpotent. If $c_{n-1}=1$, Choose $$W=\left(
\begin{array}{cccccc}
0&&&&&0\\
1&0&&&&0\\
&1&0&&&0\\
&&&\ddots&&\vdots\\
&&&\ddots&0&0\\
&&&&1&0
\end{array}
\right), E=\left(
\begin{array}{cccccc}
0&&&&&c_0\\
0&0&&&&c_1\\
&0&0&&&c_2\\
&&&\ddots&&\vdots\\
&&&\ddots&0&c_{n-2}\\
&&&&0&1
\end{array}
\right).$$ Then $E^2=E$, and so $A=E+W$ is trinil clean.

Case II. $c_{n-1}=-1$, choose $$W=\left(
\begin{array}{cccccc}
0&&&&&0\\
1&0&&&&0\\
&1&0&&&0\\
&&&\ddots&&\vdots\\
&&&\ddots&0&0\\
&&&&1&0
\end{array}
\right), E=\left(
\begin{array}{cccccc}
0&&&&&c_0\\
0&0&&&&c_1\\
&0&0&&&c_2\\
&&&\ddots&&\vdots\\
&&&\ddots&0&c_{n-2}\\
&&&&0&-1
\end{array}
\right).$$ Then $E^2=-E$, so $E^3=E$ and $A=E+W$.

Case III. $c_{n-1}=0$.

If $n=2$, then $$\left(
\begin{array}{cc}
0&c_0\\
1&0
\end{array}
\right)=\left(
\begin{array}{cc}
0&1\\
1&0
\end{array}
\right)+\left(
\begin{array}{cc}
0&c_0-1\\
0&0
\end{array}
\right).$$

If $n=3$, then $$\left(
\begin{array}{ccc}
0&0&c_0\\
1&0&c_1\\
0&1&0
\end{array}
\right)=\left(
\begin{array}{ccc}
0&0&0\\
1&0&1\\
0&1&0
\end{array}
\right)+\left(
\begin{array}{ccc}
0&0&c_0\\
0&0&c_1-1\\
0&0&0
\end{array}
\right).$$

If $n\geq 4$, we have
$$\begin{array}{l}
A=\left(
\begin{array}{cccccccc}
0&0&0&\cdots&0&0&0&0\\
0&0&0&\cdots&0&0&0&0\\
0&0&0&\cdots&0&0&0&0\\
\vdots&\vdots&\vdots&\ddots&\vdots&\vdots&\vdots&\vdots\\
0&0&0&\cdots&0&0&0&0\\
0&0&0&\cdots&0&1&0&1\\
0&0&0&\cdots&0&0&1&0
\end{array}
\right)\\
+\left(
\begin{array}{cccccccc}
0&0&0&\cdots&0&0&0&c_0\\
1&0&0&\cdots&0&0&0&c_1\\
0&1&0&\cdots&0&0&0&c_2\\
\vdots&\vdots&\vdots&\ddots&\vdots&\vdots&\vdots&\vdots\\
0&0&0&\cdots&1&0&0&c_{n-3}\\
0&0&0&\cdots&0&0&0&c_{n-2}-1\\
0&0&0&\cdots&0&0&0&0
\end{array}
\right).
\end{array}$$ This implies that $A=E+W$ where $E=E^3$ and $W\in M_n({\Bbb Z}_3)$ is nilpotent. This completes the proof.\end{proof}

\begin{thm} Let $R$ be tripotent. Then $M_n(R)$ is trinil clean for all $n\in {\Bbb N}$.\end{thm}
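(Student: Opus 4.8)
The plan is to reduce the assertion to the field case already settled in Theorem 4.1, by exploiting that a tripotent ring is locally finite in a strong sense. The explicit tripotent-plus-nilpotent construction of Theorem 4.1 rests on the Frobenius normal form, which has no analogue over a non-field base; so rather than trying to normalize a given matrix I would instead shrink its coefficient ring.

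First I would record the structural facts about a tripotent ring $R$. Since $2=2^3=8$, we get $6=0$ in $R$; by Jacobson's commutativity theorem $R$ is commutative, and $a=a^3$ forces $R$ to be reduced (if $x^2=0$ then $x=x^3=x\cdot x^2=0$). The crucial observation is that any finitely generated subring is \emph{finite}: if $S\subseteq R$ is generated by elements $a_1,\dots,a_m$ together with $1_R$, then, as $6\cdot 1=0$, the ring $S$ is a commutative ${\Bbb Z}/6{\Bbb Z}$-algebra spanned by the monomials $a_1^{e_1}\cdots a_m^{e_m}$, and the relation $a_i^3=a_i$ lets me restrict each exponent to $e_i\in\{0,1,2\}$; hence $|S|\le 6^{3^m}$. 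A finite commutative reduced ring is a finite product of finite fields, and a field in which $x^3=x$ holds for every element has at most three elements, so each factor is ${\Bbb Z}_2$ or ${\Bbb Z}_3$. Therefore $S\cong {\Bbb Z}_2^{s}\times {\Bbb Z}_3^{t}$ for some $s,t\in {\Bbb N}$.

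With this in hand the argument is short. Given an arbitrary $A\in M_n(R)$, I would let $S$ be the subring generated by $1_R$ and the finitely many entries of $A$, so that $A\in M_n(S)$ and $S\cong {\Bbb Z}_2^{s}\times {\Bbb Z}_3^{t}$. Then $M_n(S)\cong M_n({\Bbb Z}_2)^{s}\times M_n({\Bbb Z}_3)^{t}$, which is a finite product of trinil clean rings by Theorem 4.1, hence trinil clean by Example 2.2(2). Consequently $A=E+W$ in $M_n(S)$ with $E^3=E$ and $W$ nilpotent; since both identities survive the inclusion $M_n(S)\hookrightarrow M_n(R)$, the matrix $A$ is the sum of a tripotent and a nilpotent in $M_n(R)$. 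As $A$ was arbitrary, $M_n(R)$ is trinil clean for every $n\in {\Bbb N}$.

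The main obstacle is conceptual rather than computational: the hard point is to realize that trinil-clean-ness of a single matrix can be tested inside the finite subring generated by its entries, which dissolves the need for any canonical form over $R$. Once one sees this, the only genuinely new ingredient is the finiteness of finitely generated tripotent rings, established above through Jacobson commutativity and the power reduction $a^3=a$; everything else is bookkeeping with the inclusion of coefficient rings and an appeal to the field case.
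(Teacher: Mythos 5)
Your proposal is correct and follows essentially the same route as the paper: both pass to the subring $S$ generated by the entries of the given matrix, observe that $S$ is a finite commutative ring with $x=x^3$ (hence a finite product of copies of ${\Bbb Z}_2$ and ${\Bbb Z}_3$), and conclude via the field case of Theorem 4.1 that $M_n(S)$, and therefore the matrix itself inside $M_n(R)$, is trinil clean. Your write-up is in fact somewhat more careful than the paper's, since you justify the finiteness of $S$ explicitly (bounding it by $6^{3^m}$ via the exponent reduction $a_i^3=a_i$) and spell out the identification $S\cong {\Bbb Z}_2^{s}\times {\Bbb Z}_3^{t}$, both of which the paper asserts without proof.
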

\begin{proof} Let $A\in M_n(R)$, and
let $S$ be the subring of $R$ generated by the entries of $A$.
That is, $S$ is formed by finite sums of monomials of the form:
$a_1a_2\cdots a_m$, where $a_1,\cdots,a_m$ are entries of $A$. Since $R$ is a
commutative ring in which $6=0$, $S$
is a finite ring in which $x=x^3$ for all $x\in S$. Thus, $S$ is isomorphic to finite direct
product of ${\Bbb Z}_2$ or ${\Bbb Z}_3$. Clearly, ${\Bbb Z}_2$ and ${\Bbb Z}_3$ are tripotent. Hence, $S$ is tripotent. Thus, $M_n(S)$ is
trinil clean. As $A\in M_n(S)$, $A$ is the sum of two idempotent matrices and a nilpotent matrix over $S$,
as desired.\end{proof}

\begin{cor} Let $R$ be regular, and let $n\geq 2$. Then $R$ is tripotent if and only if\end{cor}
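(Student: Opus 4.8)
The forward implication I would dispose of immediately: it is precisely the content of Theorem 4.2, which guarantees that $M_n(R)$ is trinil clean for every $n$ as soon as $R$ is tripotent (and one should recall that a tripotent ring, satisfying $x=x^3$ identically, is automatically commutative, so the commutativity-type conditions on the right-hand side come for free). Thus essentially the whole burden of the proof lies in the converse, where starting only from the trinil cleanness of the matrix ring I must recover the identity $x=x^3$ on $R$ itself.

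For the converse the plan is to pass to residue fields and then quote the field case. Being regular, $R$ has $J(R)=0$, and as a commutative regular ring it is a subdirect product of the fields $K_{\mathfrak m}:=R/\mathfrak m$, with $\mathfrak m$ ranging over the maximal ideals, so that $\bigcap_{\mathfrak m}\mathfrak m=0$. For each such $\mathfrak m$ the ring $M_n(K_{\mathfrak m})\cong M_n(R)/M_n(\mathfrak m)$ is a homomorphic image of $M_n(R)$ and is therefore trinil clean by Example 2.2(1). Now Theorem 4.1 applies directly: since $n\ge 2$, trinil cleanness of $M_n(K_{\mathfrak m})$ forces $K_{\mathfrak m}\cong {\Bbb Z}_2$ or ${\Bbb Z}_3$, and in each of these fields $\bar a=\bar a^{3}$ holds for every element. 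Consequently $x^{3}-x\in\mathfrak m$ for every $x\in R$ and every maximal ideal $\mathfrak m$, whence $x^{3}-x\in\bigcap_{\mathfrak m}\mathfrak m=0$ and $R$ is tripotent.

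The step I expect to be the genuine obstacle, and the reason the plain regularity hypothesis cannot carry the argument by itself, is exactly this reduction to the field case. The reduction rests on commutativity (or at least reducedness), and it cannot be dropped: $M_2({\Bbb Z}_2)$ is a regular ring whose matrix ring $M_4({\Bbb Z}_2)$ is trinil clean by Theorem 4.1, yet $M_2({\Bbb Z}_2)$ is not tripotent, since $E_{12}$ satisfies $E_{12}^{3}=0\neq E_{12}$. So before Theorem 4.1 can be invoked one must know that the relevant residues of $R$ are honest fields rather than full matrix rings $M_k(D)$ or noncommutative division rings, and ruling out these possibilities is precisely where the commutativity built into the hypotheses does its work. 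Once $R$ has been displayed as a subdirect product of fields, the remaining bookkeeping, namely that trinil cleanness survives each matrix quotient and that the identity $x=x^{3}$, valid in every factor, lifts back to $R$, is routine.
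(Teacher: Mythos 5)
Your proposal is correct and follows essentially the same route as the paper: the forward direction via commutativity of tripotent rings together with Theorem 4.2, and the converse by passing to the residue fields $R/\mathfrak{m}$ (using that quotients of trinil clean rings are trinil clean), invoking Theorem 4.1 to force each $R/\mathfrak{m}\cong {\Bbb Z}_2$ or ${\Bbb Z}_3$, and recovering $x=x^3$ from the subdirect product representation since $J(R)=0$. Your added example $M_2({\Bbb Z}_2)$, showing that commutativity cannot be dropped, is a correct and worthwhile observation beyond what the paper records.
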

\begin{enumerate}
\item [(1)]{\it $R$ is commutative;}
\vspace{-.5mm}
\item [(2)]{\it $M_n(R)$ is trinil clean.}
\end{enumerate}
\begin{proof} $\Longrightarrow $ In light of~\cite[Theorem 1]{HT}, $R$ is commutative. By virtue of Theorem 4.2, $M_n(R)$ is trinil clean, as required.

$\Longleftarrow $ Let $M$ be a maximal ideal of $R$. Then $R/M$ is a field. By hypothesis, $M_n(R)$ is trinil clean, and then so is $M_n(R/M)$.
In light of Theorem 4.1, $R/M\cong {\Bbb Z}_2$ or ${\Bbb Z}_3$. Thus, $R$ is isomorphic to the subdirect product of ${\Bbb Z}_2$'s and ${\Bbb Z}_3$'s.
Since ${\Bbb Z}_2$ and ${\Bbb Z}_3$ are both tripotent, we then easily check that $R$ is tripotent.\end{proof}

We are ready to prove the following main theorems.

\begin{thm} Let $R$ be 2-primal strongly 2-nil-clean ring. Then $M_n(R)$ is trinil clean for all $n\in {\Bbb N}$.\end{thm}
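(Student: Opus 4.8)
The plan is to reduce modulo the nilpotents, invoke Theorem 4.2 on the tripotent quotient, and then lift, with $2$-primality doing the decisive work at the reduction stage. Since $R$ is strongly $2$-nil-clean, Corollary 3.3 gives that $J(R)$ is nil and $R/J(R)$ is tripotent. Because $R$ is $2$-primal, $N(R)=\mathrm{Nil}_*(R)$; as $J(R)$ is nil we have $J(R)\subseteq N(R)$, while $\mathrm{Nil}_*(R)\subseteq J(R)$ always, so $J(R)=N(R)=\mathrm{Nil}_*(R)$. Consequently $J(M_n(R))=M_n(J(R))=M_n(\mathrm{Nil}_*(R))=\mathrm{Nil}_*(M_n(R))$ is nil, and $M_n(R)/J(M_n(R))\cong M_n(R/J(R))$ with $R/J(R)$ tripotent, hence trinil clean by Theorem 4.2. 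This is precisely where $2$-primality is essential: it sidesteps the Köthe-type issue that $M_n$ of a nil ideal need not be nil, by identifying $M_n(J(R))$ with the prime radical of $M_n(R)$.

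Next I would separate the primes $2$ and $3$. As $R$ is trinil clean, Theorem 2.4 gives $R=A\times B$ with $2\in J(A)$ and $3\in J(B)$; both factors are again $2$-primal and strongly $2$-nil-clean, and $M_n(R)\cong M_n(A)\times M_n(B)$. Since a finite product of trinil clean rings is trinil clean (Example 2.2 (2)), it suffices to treat the two factors separately. For $B$ one has $3\in J(M_n(B))$, the quotient $M_n(B)/J(M_n(B))\cong M_n(B/J(B))$ is trinil clean by Theorem 4.2 (as $B/J(B)$ is tripotent), and $J(M_n(B))$ is nil by the first paragraph; thus Theorem 2.7 applies verbatim and $M_n(B)$ is trinil clean.

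The main obstacle is the factor $M_n(A)$ with $2\in J(A)$, since Theorem 2.7 is unavailable here: its construction rests on $3\in J$ (splitting a tripotent as a difference of orthogonal idempotents), and in characteristic $2$ a tripotent need not lift along a nil ideal. My plan is to route through nil clean rings, whose building blocks are idempotents and therefore do lift modulo nil ideals. First, $A/J(A)$ is tripotent with $2=0$, and tripotent rings are commutative and reduced, so $A/J(A)$ is a subdirect product of fields satisfying $x^3=x$ in characteristic $2$; such a field is ${\Bbb Z}_2$, whence $A/J(A)$ is Boolean. Next, mimicking the finite-subring-generated-by-the-entries argument of Theorem 4.2, and using that the decomposition produced over ${\Bbb Z}_2$ in the proof of Theorem 4.1 is an idempotent plus a nilpotent, I would show $M_n(A/J(A))$ is nil clean. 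Finally, since $J(M_n(A))$ is nil and idempotents lift modulo nil ideals, any nil clean decomposition $\overline{X}=\overline{E}+\overline{W}$ in $M_n(A/J(A))$ lifts: choosing an idempotent $E\in M_n(A)$ over $\overline{E}$, the difference $X-E$ reduces to the nilpotent $\overline{W}$, so $X-E$ is nilpotent. Hence $M_n(A)$ is nil clean, and therefore trinil clean by Example 2.2 (1). Recombining the two factors then shows $M_n(R)$ is trinil clean for all $n\in{\Bbb N}$.
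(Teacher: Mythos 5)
Your proposal is correct in overall architecture and, on the $3$-part, coincides with the paper's own proof: both split $R\cong A\times B$ via Theorem 2.4, both use Corollary 3.3 to get $J$ nil and a tripotent quotient, both use $2$-primality to identify $M_n(J)$ with the prime radical of the matrix ring (hence nil, exactly the Köthe sidestep you highlight — the paper does this via $J(R_2)=P(R_2)$ and $M_n(P(R_2))=P(M_n(R_2))$), and both finish the $3\in J(B)$ factor with Theorem 4.2 plus the lifting result Theorem 2.7. Where you genuinely diverge is the $2\in J(A)$ factor: the paper quotes \cite[Theorem 2.11]{CS} (strongly 2-nil-clean with $2\in J$ implies strongly nil-clean) and then \cite[Theorem 6.1]{KWZ} to conclude $M_n(A)$ is nil-clean, whereas you argue directly that $A/J(A)$ is Boolean, that $M_n$ of a Boolean ring is nil clean via the finite-subring reduction, and that nil clean decompositions lift along the nil ideal $J(M_n(A))=M_n(J(A))=P(M_n(A))$ because idempotents lift modulo nil ideals and $(X-E)^k$ lands in a nil ideal. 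This is self-contained, avoids the external citations, and makes explicit that $2$-primality is needed on \emph{both} factors, a point the paper's appeal to \cite{KWZ} leaves implicit.

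One step as written is inaccurate, though easily repaired: you assert that the decompositions produced in the proof of Theorem 4.1 are idempotent-plus-nilpotent over ${\Bbb Z}_2$. That holds in Cases I and II (which merge over ${\Bbb Z}_2$ since $-1=1$), but fails in Case III ($c_{n-1}=0$): there the displayed $E$ is only a tripotent, e.g.\ for $n=2$ it is $\left(\begin{smallmatrix}0&1\\1&0\end{smallmatrix}\right)$ with $E^2=I_2\neq E$, and the $n=3$ matrix likewise satisfies $E^3=E$ but $E^2\neq E$. So nil-cleanness of $M_n({\Bbb Z}_2)$ cannot be read off from Theorem 4.1's proof; you should instead invoke \cite[Theorem 3]{B} (every square matrix over ${\Bbb Z}_2$ is the sum of an idempotent and a nilpotent), which the paper already references at the start of Section 4, and which combines with your finite-subring reduction to give nil-cleanness of $M_n$ over any Boolean ring. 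With that substitution, your argument for the characteristic-$2$ factor, and hence the whole proof, goes through.
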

\begin{proof} According to Theorem 2.4,
$R\cong R_1\times R_2$, where $R_1$ and $R_2$ are both strongly 2-nil-clean, $2\in J(R_1)$ and $3\in J(R_2)$.
By virtue of ~\cite[Theorem 2.11]{CS}, $R_1$ is strongy nil-clean. According ~\cite [Theorem 6.1] {KWZ}, $M_n(R_1)$ is nil-clean.
As $R_2$ is a homomorphic image of a strongly 2-nil-clean ring, $R_2$ is strongly 2-nil-clean. It follows by Corollary 3.3 that $J(R_2)$ is nil and $R_2/J(R_2)$ is tripotent.
In light of Theorem 4.2, $M_n(R_2/J(R_2))$ is trinil clean. Furthermore,
$J(R_2)\subseteq N(R_2)=P(R_2)\subseteq J(R_2)$, we get $J(R_2)=P(R_2)$. Hence, $M_n(J(R_2))=M_n(P(R_2))=P(M_n(R_2))$ is nil. Obviously, $3\in J(M_2(R_2))$.
Since $M_n\big(R_2/J(R_2)\big)$ $\cong M_n(R_2)/M_n(J(R_2))$, it follows by Theorem 2.7, that $M_n(R_2)$ is trinil clean.

Therefore $M_n(R)\cong M_n(R_1)\times M_n(R_2)$ is trinil clean, as asserted.\end{proof}

\begin{cor} Let $R$ be
a commutative trinil clean ring. Then $M_n(R)$ is trinil-clean for all $n\in {\Bbb N}$.\end{cor}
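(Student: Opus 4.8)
The plan is to derive this corollary as a direct consequence of Theorem 4.4, by checking that a commutative trinil clean ring already satisfies the hypotheses of that theorem. The only thing to verify is that such a ring is a \emph{2-primal strongly 2-nil-clean} ring, after which the matrix statement is immediate.

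First I would observe that any commutative ring is 2-primal: in a commutative ring the prime radical coincides with the nilradical, which is exactly the set $N(R)$ of nilpotent elements, so the defining condition of 2-primality holds automatically (this is precisely the standard example flagged in the introduction). Thus our $R$ is 2-primal. Next, since $R$ is trinil clean by assumption and we have just shown it to be 2-primal, Corollary 3.5 applies and yields that $R$ is strongly 2-nil-clean.

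At this point $R$ is a 2-primal strongly 2-nil-clean ring, so Theorem 4.4 gives directly that $M_n(R)$ is trinil clean for every $n\in {\Bbb N}$, which is the assertion. There is no genuine obstacle here: the entire content of the corollary is carried by Theorem 4.4, and the proof amounts to recognizing that commutativity simultaneously supplies the 2-primal hypothesis and, through Corollary 3.5, the strongly 2-nil-clean hypothesis. The only step requiring even a moment's care is the passage from commutativity to 2-primality, and that is a routine and well-known fact.
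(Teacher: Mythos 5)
Your proof is correct and is exactly the paper's intended derivation: the corollary is stated immediately after Theorem 4.4 without a separate proof, precisely because a commutative ring is 2-primal (its prime radical equals the set of nilpotents) and Corollary 3.5 then upgrades trinil clean to strongly 2-nil-clean. Nothing further is needed.
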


Recall that a ring $R$ is weakly nil-clean if every element in $R$ is the sum or difference of a nilpotent and an idempotent (cf. \cite{KZ}).

\begin{cor} Let $R$ be a commutative weakly nil-clean ring. Then $M_n(R)$ is trinil clean for all $n\in {\Bbb N}$.\end{cor}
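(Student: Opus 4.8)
The plan is to recognize this statement as an immediate specialization of Corollary 4.5, so that essentially no new argument is required. The single fact to bring in is Example 2.2(1), which records that every weakly nil-clean ring is already trinil clean. I would first make this inclusion explicit: if $R$ is weakly nil-clean and $a\in R$, then by definition $a=e+w$ or $a=-e+w$ for some idempotent $e$ and some nilpotent $w$. In the first case $e$ is a tripotent; in the second case $-e$ is a tripotent, since $(-e)^3=-e$. Hence in either case $a$ is the sum of a tripotent and a nilpotent, and so $R$ is trinil clean.

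With this observation in place, the proof collapses to a one-line reduction. Since $R$ is commutative and weakly nil-clean, it is in particular a commutative trinil clean ring, and Corollary 4.5 then yields directly that $M_n(R)$ is trinil clean for every $n\in {\Bbb N}$. No similarity reduction, Frobenius normal form, or Chinese Remainder decomposition needs to be redone here, because all of that is already packaged inside Corollary 4.5.

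I do not expect any genuine obstacle: the entire substantive content lives upstream. In fact, Corollary 4.5 itself rests on Theorem 4.4, reached by noting that a commutative ring is $2$-primal and that a $2$-primal trinil clean ring is strongly $2$-nil-clean (Corollary 3.5), after which the matrix ring is handled through the splitting $R\cong R_1\times R_2$ with $2\in J(R_1)$ and $3\in J(R_2)$. The only point worth a second look in the present corollary is simply that the hypothesis is being used in the correct direction—namely that weak nil-cleanness is the stronger condition and feeds into trinil cleanness via Example 2.2(1)—so that the chain of implications closes cleanly.
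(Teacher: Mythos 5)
Your proposal is correct and follows essentially the same route as the paper, which likewise observes that a commutative weakly nil-clean ring is a commutative (hence 2-primal) trinil clean ring and then invokes the commutative matrix result (Corollary 4.5, resting on Theorem 4.4). Your explicit verification that $(-e)^3=-e$ makes a tripotent out of the difference case simply fills in the fact the paper records in its first example list (note it is Example 2.1(1), not 2.2(1)), so the two arguments coincide.
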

\begin{proof} As every commutative weakly nil-clean ring is trinil clean 2-primal ring, we obtain the result, by Theorem 4.5.\end{proof}

\begin{exam} Let $n\in {\Bbb N}$. Then $M_n ({\Bbb Z}_{m})$ is trinil clean for all $n\in {\Bbb N}$ if and only if
$m=2^k3^l (k,l\in {\Bbb N}^+, k+l\neq 0)$.
\end{exam}
\begin{proof} $\Longrightarrow $ In view of Lemma 2.3, $6\in N({\Bbb Z}_m)$. Hence, $m=2^k3^l (k,l\in {\Bbb N}^+, k+l\neq 0)$.

$\Longleftarrow $ By hypothesis, ${\Bbb Z}_m\cong {\Bbb Z}_{2^k}\oplus {\Bbb Z}_{3^l}$. Since $J\big({\Bbb Z}_{2^k}\big)=2{\Bbb Z}_{2^k}$ and ${\Bbb Z}_{2^k}/J\big({\Bbb Z}_{2^k}\big)\cong {\Bbb Z}_2$, it follows by Corollary 3.3, ${\Bbb Z}_{2^k}$ is trinil clean. Likewise, ${\Bbb Z}_{3^l}$ is trinil clean.
This shows that ${\Bbb Z}_m$ is trinil clean. This completes the proof, by Corollary 4.5.\end{proof}

Recall that a ring $R$ is of bounded index if there exists some $n\in {\Bbb N}$ such that $x^n=0$ for all nilpotent $x\in R$.

\begin{lem} (~\cite[Lemma 6.6]{KWZ}) Let $R$ be of bounded index. If $J(R)$ is nil, then $M_n(R)$ is nil for all $n\in {\Bbb N}$.
\end{lem}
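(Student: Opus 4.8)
The statement as literally written cannot hold for a nonzero unital $R$, since $M_n(R)$ contains the identity; the intended conclusion is that $J(M_n(R))$ is nil. Using the standard identification $J(M_n(R))=M_n(J(R))$, the plan is therefore to show that every matrix with entries in $J(R)$ is nilpotent, which is exactly what the subsequent application (concluding $P(M_n(R))=M_n(J(R))$ is nil, as in the proof of Theorem 4.5) requires.

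First I would record that $J(R)$ is a nil ring of bounded index. Since $R$ is of bounded index, there is some $m\in {\Bbb N}$ with $x^m=0$ for every $x\in N(R)$; as $J(R)$ is nil we have $J(R)\subseteq N(R)$, so $x^m=0$ for all $x\in J(R)$, and the same bound $m$ is inherited by every subring of $J(R)$. Now fix $A=(a_{ij})\in M_n(J(R))$ and let $S$ be the subring of $J(R)$ generated by its finitely many entries $a_{ij}$. Then $S$ is a finitely generated nil ring of bounded index at most $m$.

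The key ingredient is the local nilpotence of nil rings of bounded index (Levitzki): such a ring is locally nilpotent, so the finitely generated ring $S$ is in fact nilpotent, say $S^k=0$ for some $k\in {\Bbb N}$. Since any product of $k$ matrices over $S$ has each entry equal to a sum of products of $k$ elements of $S$, we obtain $M_n(S)^k\subseteq M_n(S^k)=0$; in particular $A\in M_n(S)$ gives $A^k=0$. Hence every element of $M_n(J(R))=J(M_n(R))$ is nilpotent, which is the assertion.

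The main obstacle is precisely this local-nilpotence input: upgrading the uniform bound $x^m=0$ on the elements of $S$ to genuine nilpotence $S^k=0$ of the finitely generated ring is the one substantial step, since bounded index by itself does not force a nil ring to be nilpotent. Once $S^k=0$ is in hand, the passage to $M_n(S)^k=0$ and hence to $A^k=0$ is entirely routine, and the reduction from $M_n(J(R))$ to the finitely generated $S$ generated by a single matrix's entries is what makes the finiteness available.
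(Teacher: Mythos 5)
Your proposal is correct. Note first that the paper gives no proof of this lemma at all: it is quoted verbatim from \cite[Lemma 6.6]{KWZ}, so there is no internal argument to compare against, and the comparison must be with the cited source. You rightly diagnose the misprint --- the conclusion must be that $J(M_n(R))$ is nil, not $M_n(R)$, which is confirmed both by the statement of Lemma 6.6 in \cite{KWZ} and by how the lemma is actually invoked in the proof of Theorem 4.9 (where the authors conclude ``$J(M_n(R_2))$ is nil''). Your argument is then the standard one and is essentially the proof in the cited source: $J(R)$ is nil of bounded index, Levitzki's theorem upgrades this to local nilpotence, so the subring $S$ generated by the entries of a matrix $A\in M_n(J(R))=J(M_n(R))$ satisfies $S^k=0$, whence $A^k\in M_n(S^k)=0$. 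You correctly identify Levitzki's local-nilpotence theorem as the one substantial input --- bounded index alone does not make a nil ring nilpotent, and without it the entrywise reduction to a finitely generated subring buys nothing. All the routine steps ($J(M_n(R))=M_n(J(R))$ for unital $R$, the inheritance of the bound $m$ by subrings, and the containment $M_n(S)^k\subseteq M_n(S^k)$) are stated accurately, so the proof is complete.
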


\begin{thm} Let $R$ be of bounded index. If $R$ is strongly 2-nil-clean, then $M_n(R)$ is trinil clean for all $n\in {\Bbb N}$.
\end{thm}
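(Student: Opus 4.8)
The plan is to imitate the proof of the preceding main theorem on $2$-primal rings, using Lemma 4.8 in place of the single step that relied on $2$-primality. First I would apply Theorem 2.4 to obtain a decomposition $R\cong R_1\times R_2$ in which $R_1$ and $R_2$ are strongly $2$-nil-clean with $2\in J(R_1)$ and $3\in J(R_2)$. Since this is a genuine direct product, each factor inherits the bounded index of $R$: a nilpotent of $R_1$ is of the form $(x,0)$ in $R$ and hence is killed by the uniform bound, and likewise for $R_2$. Because $M_n(R)\cong M_n(R_1)\times M_n(R_2)$ and a finite product of trinil clean rings is trinil clean by Example 2.2 (2), it suffices to prove that $M_n(R_1)$ and $M_n(R_2)$ are trinil clean.

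The factor $R_1$ is handled exactly as before: as $2\in J(R_1)$, the ring $R_1$ is strongly nil-clean by \cite[Theorem 2.11]{CS}, so $M_n(R_1)$ is nil-clean by \cite[Theorem 6.1]{KWZ}, and therefore trinil clean by Example 2.2 (1). No new hypothesis is needed here.

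For the factor $R_2$, where $3\in J(R_2)$, I would apply Theorem 2.7 to the ring $S:=M_n(R_2)$. By Corollary 3.3, $J(R_2)$ is nil and $R_2/J(R_2)$ is tripotent, so $M_n\big(R_2/J(R_2)\big)$ is trinil clean by Theorem 4.2. Using the standard identity $J\big(M_n(R_2)\big)=M_n\big(J(R_2)\big)$, this says precisely that $S/J(S)\cong M_n\big(R_2/J(R_2)\big)$ is trinil clean; moreover $3I_n\in M_n\big(J(R_2)\big)=J(S)$, so $3\in J(S)$. The only remaining hypothesis of Theorem 2.7 is that $J(S)=M_n\big(J(R_2)\big)$ be nil, after which Theorem 2.7 delivers that $M_n(R_2)$ is trinil clean and the product decomposition finishes the argument.

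The main obstacle is exactly this last nilpotency of $M_n\big(J(R_2)\big)$. In the $2$-primal case it followed from $J(R_2)=P(R_2)$ together with $M_n\big(P(R_2)\big)=P\big(M_n(R_2)\big)$, an identity that is unavailable here. This is precisely where the bounded-index hypothesis enters: since $R_2$ is of bounded index and $J(R_2)$ is nil, Lemma 4.8 gives that $M_n\big(J(R_2)\big)$ is nil, completing the verification.
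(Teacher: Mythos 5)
Your proposal is correct and takes essentially the same route as the paper: the Theorem 2.4 decomposition $R\cong R_1\times R_2$, the treatment of $R_1$ via \cite[Theorem 2.11]{CS} and \cite[Theorem 6.1]{KWZ}, and the passage from Corollary 3.3 and Theorem 4.2 through Theorem 2.7 for $M_n(R_2)$. If anything, your write-up is slightly more careful than the paper's: you explicitly verify that the factors inherit bounded index (the paper just says ``clearly''), and you correctly attribute the nilpotency of $M_n\big(J(R_2)\big)$ to Lemma 4.8, where the paper's text cites Corollary 3.3, an apparent slip for Lemma 4.8.
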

\begin{proof} In view of Theorem 2.4, we easily see that
$R\cong R_1\times R_2$, where $R_1$ and $R_2$ are both strongly 2-nil-clean, $2\in J(R_1)$ and $3\in J(R_2)$.
By virtue of ~\cite[Theorem 2.11]{CS}, $R_1$ is strongy nil-clean. It follows by ~\cite[Thorem 6.1]{KWZ} that $M_n(R_1)$ is nil-clean.
In light of Corollary 3.3, $J(R_2)$ is nil and $R_2/J(R_2)$ is tripotent.
Hence, $M_n(R_2/J(R_2))$ is trinil clean by Theorem 4.2. Clearly, $R_2$ is of bounded index. In terms of
Corollary 3.3 , $J(M_n(R_2))$ is nil. Clearly, $3\in J(M_n(R_2))$. As $M_n\big(R_2/J(R_2)\big)$ $\cong M_n(R_2)/M_n(J(R_2))$, it follows by
Theorem 2.7, that $M_n(R_2)$ is trinil clean. Therefore $M_n(R)\cong M_n(R_1)\times M_n(R_2)$ is trinil clean.\end{proof}

\begin{cor} Let $R$ be a ring, and let $m\in {\Bbb N}$. If $(a-a^3)^m=0$ for all $a\in R$, then $M_n(R)$ is trinil clean for all $n\in {\Bbb N}$.
\end{cor}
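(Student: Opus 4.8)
The plan is to recognize this as a direct consequence of Theorem 4.9, so that the whole task reduces to verifying the two hypotheses of that theorem for $R$: that $R$ is strongly 2-nil-clean and that $R$ is of bounded index. The first is immediate. The assumption $(a-a^3)^m=0$ says in particular that $a-a^3\in N(R)$ for every $a\in R$, which is exactly condition $(2)$ of Lemma 3.1; hence $R$ is strongly 2-nil-clean.

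The step I expect to carry the real content is establishing bounded index, and the key observation is that for a nilpotent element the factorization $a-a^3=a(1-a^2)$ becomes decisive. Indeed, let $x\in N(R)$. Since $x^2$ is then nilpotent, $1-x^2$ is a unit of $R$, and because $x$ and $1-x^2$ are both polynomials in $x$ they commute. Applying the hypothesis to $a=x$ gives $0=(x-x^3)^m=\big(x(1-x^2)\big)^m=x^m(1-x^2)^m$. As $(1-x^2)^m$ is a unit commuting with $x^m$, multiplying on the right by its inverse yields $x^m=0$. Thus every nilpotent element of $R$ has index of nilpotency at most $m$, so $R$ is of bounded index with bound $m$.

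With both hypotheses in hand, Theorem 4.9 applies directly and shows that $M_n(R)$ is trinil clean for all $n\in{\Bbb N}$, which completes the proof. The only genuinely new ingredient is the bounded-index computation above, where the nilpotence of $x$ is leveraged to turn $1-x^2$ into an invertible factor; everything else is bookkeeping that funnels the hypothesis into the form required by Theorem 4.9.
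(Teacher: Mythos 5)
Your proposal is correct and takes essentially the same route as the paper: both reduce the statement to Theorem 4.9 by verifying that $R$ is strongly 2-nil-clean and of bounded index, and your bounded-index argument ($x$ nilpotent forces $1-x^2\in U(R)$, so $0=(x-x^3)^m=x^m(1-x^2)^m$ yields $x^m=0$) is exactly the paper's computation. The only difference is a welcome simplification in the first step: you obtain strong 2-nil-cleanness in one line from Lemma 3.1, since $(a-a^3)^m=0$ gives $a-a^3\in N(R)$ for all $a$, whereas the paper detours through showing $J(R)$ is nil, invoking Hirano--Tominaga to make $N(R)$ an ideal, and checking that $R/J(R)$ is tripotent before citing the same equivalence.
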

\begin{proof} Let $x\in J(R)$. Then $(x-x^3)^m=0$, and so $x^m=0$. This implies that $J(R)$ is nil. In light of ~\cite[Theorem A.1]{HT}, $N(R)$ forms an ideal of $R$, and so $N(R)\subseteq J(R)$. Hence, $J(R)=N(R)$ is nil. Further, $R/J(R)$ is tripotent. In light of Lemma 2.7, $R$ is strongly 2-nil-clean. If $a^k=0 (k\in {\Bbb N}$, then $1-a,1+a\in U(R)$, and so $1-a^2=(1-a)(1+a)\in U(R)$.
By hypothesis, $a^m(1-a^2)^m=0$. Hence, $a^m=0$, and so $R$ is of bounded index. This complete the proof, by
Theorem 4.9.\end{proof}

A ring $R$ is a 2-Boolean ring provided that $a^2$ is an idempotent for all $a\in R$.

\begin{cor} Let $R$ be a 2-Boolean ring. Then $M_n(R)$ is trinil-clean for all $n\in {\Bbb N}$.\end{cor}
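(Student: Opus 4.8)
The plan is to reduce the statement to Corollary 4.10 by producing a uniform exponent $m$ with $(a-a^3)^m=0$ for every $a\in R$; in fact I expect $m=2$ to suffice. First I would exploit the defining property of a 2-Boolean ring directly: since $a^2$ is an idempotent for every $a\in R$, we have $a^4=(a^2)^2=a^2$.

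The key step is the observation that $1-a^2$ is \emph{also} an idempotent. Expanding $(1-a^2)^2=1-2a^2+a^4$ and substituting $a^4=a^2$ gives $(1-a^2)^2=1-a^2$. Since $a$ and $1-a^2$ are both polynomials in $a$ they commute (explicitly, $a(1-a^2)=a-a^3=(1-a^2)a$), so I can compute $(a-a^3)^2=\big(a(1-a^2)\big)^2=a^2(1-a^2)^2=a^2(1-a^2)=a^2-a^4=0$. Thus $(a-a^3)^2=0$ holds for \emph{every} $a\in R$.

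This says precisely that $R$ satisfies the hypothesis of Corollary 4.10 with $m=2$, and I would invoke that corollary to conclude that $M_n(R)$ is trinil clean for all $n\in{\Bbb N}$. I do not anticipate a genuine obstacle: the only mild subtlety is verifying that $1-a^2$ is idempotent and commutes with $a$, after which the vanishing of $(a-a^3)^2$ is a one-line identity. All the real work (passing from the uniform nilpotence bound to trinil cleanness of matrices, via the bounded-index machinery of Theorem 4.9) has already been packaged into Corollary 4.10, so this corollary amounts to recognizing that a 2-Boolean ring automatically satisfies that bound.
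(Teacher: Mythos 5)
Your proposal is correct and takes essentially the same route as the paper: both arguments reduce to Corollary 4.10 by exhibiting a uniform exponent $m$ with $(a-a^3)^m=0$ for all $a\in R$. Your extra observation that $1-a^2$ is itself idempotent sharpens the exponent to $m=2$, whereas the paper only uses $a^2(1-a^2)=0$ to conclude $(a-a^3)^3=0$; this is an inessential refinement of the same idea.
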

\begin{proof} Let $a\in R$. Then $a^2=a^4$. Hence, $a^2(1-a^2)=0$. This shows that
$(1-a^2)^2a^2(1-a^2)a=0$, i.e., $(a-a^3)^3=0$. In light of Corollary 4.10, the result follows.\end{proof}

\vskip10mm

\end{document}